\def\p{\prime}
\def\bl{\backslash}
\def\isom{\simeq}
\def\inj{\hookrightarrow}
\def\sse{\subseteq}
\def\es{\emptyset}
\def\hom{\mathrm{Hom}}
\def\cC{\mathcal C}
\def\Lm{\Lambda}
\def\al{\alpha}
\def\ot{\otimes}
\def\t{\times}
\def\mto{\mapsto}
\def\xrw{\xrightarrow}
\def\id{\mathrm{id}}
\def\De{\Delta}
\def\cO{\mathcal O}
\def\cP{\mathcal{P}}
\def\cC{\mathcal{C}}
\def\cI{\mathcal{I}}
\def\cJ{\mathcal{J}}
\def\der{\partial}
\def\bscup{\bigsqcup}
\def\Ac{\mathbf{Ac}}
\def\sSet{\mathbf{sSet}}
\def\Pos{\mathbf{Pos}}
\def\Cat{\mathbf{Cat}}
\def\Set{\mathbf{Set}}
\def\Sd{\mathrm{Sd}}
\def\Ex{\mathrm{Ex}}
\def\lrs{\leftrightarrows}
\def\rls{\rightleftarrows}
\def\sV{\mathscr{V}}
\def\Lan{\mathrm{Lan}}
\def\opp{\mathrm{op}}
\def\colim{\mathrm{colim}}
\def\Id{\mathrm{Id}}
\def\ob{\mathrm{ob}}
\def\cM{\mathcal{M}}
\def\cN{\mathcal{N}}
\def\ihom{\mathcal{H}{om}}
\def\rt{\rtimes}
\def\mor{\mathrm{mor}}
\newtheorem{thm}{Theorem}[section]
\newtheorem{lemma}[thm]{Lemma}
\newtheorem{prop}[thm]{Proposition}
\newtheorem{coro}[thm]{Corollary}
\theoremstyle{definition}
\newtheorem{defn}[thm]{Definition}
\newtheorem{rmk}[thm]{Remark}
\begin{document}
  \title{Generalized equivariant model structures on $\Cat^I$}
  \author{Yuzhou Gu}
  \date{}
  \begin{abstract}
  Let $I$ be a small category, $\cC$ be the category $\Cat$, $\Ac$ or $\Pos$ of small categories, acyclic categories, or posets, respectively. Let $\cO$ be a locally small class of objects in $\Set^I$ such that $\colim_I O=*$ for every $O\in \cO$. 
  We prove that $\cC^I$ admits the $\cO$-equivariant model structure in the sense of Farjoun, and that it is Quillen equivalent to the $\cO$-equivariant model structure on $\sSet^I$. This generalizes previous results of Bohmann-Mazur-Osorno-Ozornova-Ponto-Yarnall and of May-Stephan-Zakharevich when $I=G$ is a discrete group and $\cO$ is the set of orbits of $G$. 
  \end{abstract}
  \maketitle
  
  \tableofcontents
  
  \section{Introduction}
  In \cite{Tho80}, Thomason defined a model structure on $\Cat$, the category of small categories, which is Quillen equivalent to the standard model structure on $\sSet$. The Thomason model structure has been shown to transfer to other categories. 
  Raptis \cite{Rap10} showed that $\Pos$, the category of posets, admits a model structure that is Quillen equivalent to the Thomason model structure on $\Cat$. 
  Recently Bruckner \cite{Bru15} showed that $\Ac$, the category of small acyclic categories, admits a model structure that is Quillen equivalent to the Thomason model structure on $\Cat$.
  The following diagram shows the relevant Quillen equivalences. 
    \[
   \xymatrix{\sSet\ar@<.5ex>[r]^{c\Sd^2} 
   &\Cat\ar@<.5ex>[l]^{\Ex^2 N} 
   \ar@<.5ex>[r]^{p_A}
   & \Ac \ar@<.5ex>[l]^{i_A}
   \ar@<.5ex>[r]^{p_P}
   & \Pos \ar@<.5ex>[l]^{i_P}
   }
  \]
  In the diagram, $N:\Cat\to \sSet$ is the nerve functor, $c$ is its left adjoint, $\Sd:\sSet\to \sSet$ is the barycentric subdivision, and $\Ex$ is its right adjoint. 
  The functors $i_A$ and $i_P$ are the natural inclusions, and $p_A$ and $p_P$ respectively are their left adjoints. 
  
  Let $G$ be a discrete group. The above Quillen equivalences have been generalized to $G$-spaces.
  In \cite{BMOOPY13} Bohmann-Mazur-Osorno-Ozornova-Ponto-Yarnall showed that the Quillen equivalence between $\sSet$ and $\Cat$ can be lifted to a Quillen equivalence between $\sSet^G$ and $\Cat^G$, each equipped with the fixed point model structure. 
  May-Stephan-Zakharevich \cite{MSZ16} further showed that the Quillen equivalence between $\Cat^G$ and $\Pos^G$ can also be lifted. 
  The following diagram shows the relevant Quillen equivalences. 
   \[
   \xymatrix{
   \sSet^G\ar@<.5ex>[r]^{c\Sd^2}
   &\Cat^G\ar@<.5ex>[l]^{\Ex^2 N} 
   \ar@<.5ex>[r]^{p_P \circ p_A}
   & \Pos^G \ar@<.5ex>[l]^{i_A \circ i_P}
   }
  \]
  
  Our main result is a generalization of results of \cite{BMOOPY13} and of \cite{MSZ16} to diagram categories indexed by arbitrary small categories.
  The $G$-fixed point model structures are replaced with the $\cO$-equivariant model structures, where a morphism $X\to Y$ is a weak equivalence (resp. fibration) if and only if for every $O\in \cO$, the induced map $\hom(O,X)\to \hom(O,Y)$ is a weak equivalence (resp. fibration) in $\sSet$. (The full definition is given in section \ref{SecOEquiv}). We also need to specify $\sSet$-enriched structures on $\Cat^I$, $\Ac^I$ and $\Pos^I$, which are different from the usual $\sSet$-enriched structures and are discussed later. 
  
  \begin{thm} \label{MainThm}
  Let $I$ be a small category, $\cC$ be the category $\Cat$, $\Ac$ or $\Pos$. Let $\cO$ be a locally small class of diagrams in $\Set^I$ such that $\colim_I O=*$ for every $O\in \cO$. 
  Then $\cC^I$ admits the $\cO$-equivariant model structure, and this model structure is Quillen equivalent to $\sSet^I$ equipped with the $\cO$-equivariant structure.
  \end{thm}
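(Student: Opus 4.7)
The plan is to transfer the $\cO$-equivariant model structure from $\sSet^I$ (whose existence I assume from Section \ref{SecOEquiv}) to $\cC^I$ via Kan's transfer theorem applied to the levelwise adjunctions of the introduction (namely $c\Sd^2\dashv \Ex^2 N$, and for $\Ac^I$ and $\Pos^I$ further $p_A\dashv i_A$ and $p_P\dashv i_P$), and then to deduce the Quillen equivalence by reduction to Thomason's theorem applied levelwise.

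For Kan's transfer at the first step $\sSet^I \rls \Cat^I$, smallness is immediate because $\Ex^2 N$ preserves filtered colimits levelwise. The acyclicity condition, that relative cell complexes built from the generating acyclic cofibrations of $\Cat^I$ become weak equivalences in $\sSet^I$ after applying $\Ex^2 N$, should reduce to the non-equivariant Thomason acyclicity: the generating acyclic cofibrations of the $\cO$-equivariant $\sSet^I$ have the form $O\t j$ with $O\in \cO$ and $j$ a generating acyclic cofibration of $\sSet$, so the relevant cell complexes are $O$-indexed pastings of Thomason's expansions, and the weak equivalence property propagates along the cell filtration. The same pattern then handles the subsequent transfers to $\Ac^I$ and $\Pos^I$ using Bruckner's and Raptis's analyses. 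The transferred (acyclic) fibrations and weak equivalences coincide with the $\cO$-equivariant ones by construction, thanks to the chosen $\sSet$-enrichment on $\cC^I$ whose simplicial mapping space from $O\in \Set^I$ agrees with $\hom_{\sSet^I}(O, R(-))$ where $R$ is the composite right adjoint back to $\sSet^I$.

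For the Quillen equivalence, the triangle identities and 2-out-of-3 reduce the problem to showing that the derived unit $X\to \Ex^2 N c\Sd^2 X$ is a $\cO$-equivariant weak equivalence for cofibrant $X\in \sSet^I$, and analogously for the subsequent adjunctions. Since $\cO$-cofibrant objects are built cellularly from maps $O\t \der \De^n\inj O\t \De^n$, I would induct over the cell structure: the base case reduces to Thomason's non-equivariant unit being a weak equivalence of simplicial sets, and the inductive step uses that $\hom(O,-)$ sends the cellular pushouts defining cofibrant objects to homotopy pushouts. The principal obstacle I anticipate is the equivariant acyclicity step: Thomason's non-equivariant argument rests on a delicate analysis of Dwyer maps and $\Sd^2$-expansions, and carrying this through for arbitrary $I$ while preserving compatibility with the tensors $O\t -$ and the ends defining $\hom(O,-)$ is the technical heart of the proof; the further transfers through $\Ac^I$ and $\Pos^I$ inherit analogous difficulties, and the whole construction depends on the nonstandard $\sSet$-enrichment functioning as a bridge between the diagrammatic ends and the Thomason-style expansions.
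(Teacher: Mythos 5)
Your route—transfer along the levelwise adjunctions via Kan's transfer theorem, then cell-induction on the derived unit—is genuinely different from the paper's (which verifies Farjoun's orbit axioms Q1--Q3 for $\cO$ inside $\Cat^I$, $\Ac^I$, $\Pos^I$ and gets the Quillen equivalence through Chorny's small-functor category $\cP(\cM,\cO)$ and a two-out-of-three argument, Theorem \ref{Comparison}). But as written it has two genuine gaps. First, a set-theoretic one: the $\cO$-equivariant structure on $\sSet^I$ is only \emph{class}-cofibrantly generated, with generating (acyclic) cofibrations $\{O\ot K\to O\ot L\}$ where $O$ ranges over $\cO$, which is allowed to be a proper class (merely locally small). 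Kan's transfer theorem, as you invoke it, requires a \emph{set} of generating maps to run the small object argument; when $\cO$ is a proper class you need exactly the Farjoun-type machinery (local smallness plus Q1--Q3, Proposition \ref{EquivModelStructure}), and for the equivalence the class issue is why the paper routes through $\cP(\cM,\cO)$ rather than computing derived units. Your proposal never addresses this, so the transfer step is not covered by the theorem you cite.

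Second, the acyclicity step and the derived-unit induction are the actual mathematical content, and you defer rather than prove them. ``The weak equivalence property propagates along the cell filtration'' is precisely what must be established: that $\hom(O',-)=\Ex^2N\ihom(O',-)$ sends a pushout along $O\t c\Sd^2(\Lm^k[n]\to\De[n])$ to a homotopy pushout and commutes (up to equivalence, indeed on the nose for a suitable ordinal) with the transfinite compositions. This fails for general pushouts; it holds here only because $\colim_I O'=*$ forces $\ihom(O',O\t K)\cong\ihom(O',O)\t K$ (Lemma \ref{HomChange}), and because pushouts along Dwyer maps between posets have the explicit description of Lemma \ref{DwyerPushout}, so that $\ihom(O',-)$ applied to such a pushout is again a pushout along a Dwyer map (Proposition \ref{VQ1}) and hence a homotopy pushout after $N$ by Thomason's Proposition 4.3; the colimit statement (Proposition \ref{VQ2}) likewise uses $\colim_I O'=*$ together with $N$ and $\Ex$ preserving filtered colimits. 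The same ingredients are needed for your cell-induction proving the derived unit of $c\Sd^2\dashv\Ex^2N$ is an $\cO$-equivariant equivalence, and the further transfers to $\Ac^I$ and $\Pos^I$ additionally require that the inclusions into $\Cat$ preserve the relevant pushouts and filtered colimits (Thomason's Lemma 5.6, Bruckner's Propositions 4.1 and 4.5). Since you explicitly flag these points as the ``technical heart'' without supplying arguments, the proposal as it stands establishes neither the existence of the model structure nor the Quillen equivalence.
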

  When $I=G$ is a discrete group and $\cO=\cO_G$ is the category of $G$-orbits, the theorem reduces to \cite{BMOOPY13}, Theorem A and B, and \cite{MSZ16}, Theorem 1.1 and 1.2.
  
  The proofs of \cite{BMOOPY13} and \cite{MSZ16} are based on a theorem of Stephan \cite{Ste13}, which says that for suitable categories $\cC$, the category $\cC^G$ admits the fixed point model structure, and this model structure is Quillen equivalent to $\cC^{\cO_G^{\opp}}$ equipped with the projective model structure.
  This can be seen as a generalization of Elmendorf's theorem \cite{Elm83}, which says that $\sSet^G$ equipped with the fixed point model structure is Quillen equivalent to $\sSet^{\cO_G^\opp}$ equipped with the projective model structure. 
  
  In our proof of Theorem \ref{MainThm} we use a generalization of Elmendorf's theorem in another direction. 
  Dwyer and Kan \cite{DK84} proved that, for a bicomplete $\sSet$-enriched cateogry $\cM$ and a small full subcategory of orbits $\cO\sse \cM$ satisfying certain axioms, $\cM$ admits the $\cO$-equivariant model structure, and this model structure is Quillen equivalent to $\sSet^{\cO^{\opp}}$ with the projective model structure.
  
  Farjoun \cite{Far87} generalized the $\cO$-equivariant model structure to cases where $\cO$ can be a proper class rather than a set. 
  Farjoun also applied this theory to $\cM=\sSet^I$, where $I$ is a small category. 
  In this case, Farjoun showed that $\cO_I$, the class of all diagrams whose colimits over $I$ are points, is a collection of orbits. 
  
  However, it is not easy to prove an Elmendorf's theorem when $\cO$ is a proper class, because in this case, $\sSet^{\cO^\opp}$, ``the category of functors from $\cO^\opp$ to $\sSet$'', is not well-defined. This issue is partially resolved by Chorny and Dwyer \cite{CD09}, and eventually resolved by 
  Chorny \cite{Ch14}, by replacing $\sSet^{\cO^\opp}$ with $\cP(\cM)$, the category of small functors from $\cM^\opp$ to $\sSet$, equipped with the $\cO$-relative model structure (recalled in section \ref{SecChorny}). 
  Chorny's theorem says that $\cM$ equipped with the $\cO$-equivariant model structure is Quillen equivalent to $\cP(\cM)$ equipped with the $\cO$-relative model structure.
  Chorny's theorem is an essential ingredient of our proof of Theorem \ref{MainThm}. 
  
  \section{Acknowledgements}
  This work is supported in part by MIT Undergraduate Research Opportunities Program (UROP). The author is very grateful to Dr. Emanuele Dotto, for proposing the problem, supervising the author, and giving very helpful advice on various aspects. 
  
  \section{Preliminaries}
  In this section we review the necessary definitions and results. 
  
  \subsection{The Thomason model structure}
  Thomason \cite{Tho80} defined a model structure on $\Cat$ in which a morphism $f$ in $\Cat$ is a weak equivalence (resp. fibration) if and only if $\Ex^2 N f$ is a weak equivalence (resp. fibration) in $\sSet$ equipped with the standard model structure, where $N:\Cat\to \sSet$ is the nerve functor, and $\Ex:\sSet\to \sSet$ is the right adjoint of the barycentric subdivision functor $\Sd:\sSet\to \sSet$.
  
  Let $\cI_{\sSet}$ be the set of generating cofibrations $\der \De[n]\to \De[n]$, and $\cJ_{\sSet}$ be the set of generating trivial cofibrations $\Lm^k[n]\to \De[n]$ for the standard model structure on $\sSet$. 
  Then $\cI_{\Cat}=c\Sd^2 \cI_{\sSet}$ is a set of generating cofibrations and $\cJ_{\Cat}=c\Sd^2 \cJ_{\sSet}$ is a set of generating trivial cofibrations for the Thomason model structure on $\Cat$, where $c:\sSet\to \Cat$ is the left adjiont to $N$ and $\Sd:\sSet\to \sSet$ is the barycentric subdivision functor, which is left adjoint to $\Ex$. 
  
  The notion of Dwyer maps is important for the Thomason model structure. 
  \begin{defn}[\cite{Tho80}, Definition 4.1]
  Let $i:A\to B$ be a monomorphism in $\Cat$.
  
  We say $i$ is a \emph{sieve} if for every $a\in \ob(A)$ and morphism $f:b\to i(a)$ in $B$, there exists morphism $f^\p:b^\p\to a$ in $A$ such that $i(f^\p)=f$. 
  
  We say $i$ is a \emph{cosieve} if for every $a\in \ob(A)$ and morphism $g:i(a)\to b$ in $B$, there exists morphism $f^\p:a\to b^\p$ in $A$ such that $i(f^\p)=f$. 
  
  We say $i$ is a \emph{Dwyer map} if it is a sieve and factorizes as $A\xrw{f} W\xrw{j} B$ such that $f$ is a monomorphism, $j:W\to B$ is a cosieve and there is a right adjoint $r:W\to A$ to $f$. 
  \end{defn}
  
  The category $\Ac$ of small acyclic categories and the category $\Pos$ of posets are reflective subcategories of $\Cat$. Both of them admits the Thomason model structure, by Raptis \cite{Rap10} for $\Pos$ and Bruckner \cite{Bru15} for $\Ac$. 
  In the Thomason model structures on $\Ac$ and  $\Pos$, a morphism is a weak equivalence (resp. fibration) if and only if it is a weak equivalence (resp. fibration) as a morphism in $\Cat$. 
  All morphisms in $\cI_\Cat$ and $\cJ_\Cat$ are Dwyer maps between posets. 
  For $\cI=\Ac$ and $\Pos$, write $\cI_\cC=\cI_\Cat$ and $\cJ_\cC=\cJ_\Cat$.
  Then $\cI_\cC$ is the set of generating cofibrations and $\cJ_\cC$ is the set of generating trivial cofibrations for the Thomason model structure on $\cC$. 
  
  \subsection{The $\cO$-equivariant model structure}\label{SecOEquiv}
  Let $\cM$ be a bicomplete $\sSet$-enriched category. Here ``bicomplete'' means that the underlying category of $\cM$ is bicomplete, and $\cM$ is powered and copowered (i.e. cotensored and tensored) over $\sSet$. Let $\ot$ denote the copower structure on $\cM$. Let $\hom(-,-)$ denote the $\sSet$-enriched hom. Let $\cO$ be a class of objects of $\cM$. 
  
  \begin{defn}
  We say $\cM$ admits the \emph{$\cO$-equivariant model structure} if there is a model structure on $\cM$ such that a morphism $X\to Y$ in $\cM$ is a weak equivalence (resp. fibration) if and only if for every $O\in \cO$, the induced map $\hom(O,X)\to \hom(O,Y)$ is a weak equivalence (resp. fibration) in $\sSet$ with the standard model structure. 
  \end{defn}

  \begin{defn}[\cite{Far87}, Definition 1.1]
  A class $\cO$ of objects of $\cM$ is called \emph{locally small} if for every object $M\in \cM$ there exists a set of objects $\cO^\p\sse \cO$ such that every morphism from an object in $\cO$ to $M$ factors through an object in $\cO^\p$.
  \end{defn}
  \begin{rmk}
  In particular, if $\cO$ is a set, then $\cO$ is locally small. 
  \end{rmk}
  
  Dwyer and Kan \cite{DK84} defined a collection of orbits when $\cO$ is a set. 
  Farjoun \cite{Far87} generalized the definition to the case when $\cO$ can be a proper class. 
  \begin{defn}[\cite{DK84}, \cite{Far87}] \label{OrbitsDefn}
  A locally small class $\cO$ of objects in $\cM$ is called \emph{a collection of orbits}, if the following axioms hold. 
  \begin{enumerate}
  \item[Q1.] Let \[
\xymatrix{
O \ot K \ar[r] \ar[d]  &  X_{a} \ar[d]\\
O \ot L \ar[r] &  X_{a+1}\\
}
\] be a pushout in $\cM$ where $O\in \cO$ and  $K\to L\in \cI_{\sSet}$. 
Then for $O^\p\in \cO$, the diagram \[
\xymatrix{
\hom(O^\p,O \ot K) \ar[r] \ar[d]  &  \hom(O^\p,X_{a}) \ar[d]\\
\hom(O^\p,O \ot L) \ar[r] &  \hom(O^\p,X_{a+1})\\
}
\] is a homotopy pushout in $\sSet$. 

  \item[Q2.] Let $O\in \cO$ and $X_0\to \cdots \to X_a\to X_{a+1}\to \cdots$ be a continuous transfinite sequence in $\cM$ where each map $X_a\to X_{a+1}$ is as in Q1. Then 
  the natural map $$\colim_a \hom(O, X_a)\to \hom(O,\colim_a X_a)$$
  is a weak equivalence in $\sSet$. 
  
  \item[Q3.] There exists a limit ordinal $c$ such that if the sequence in Q2 is indexed by ordinals $<c$, then the natural map is an isomorphism. 
  \end{enumerate}
  \end{defn}
  
  The following proposition is proved by Dwyer and  Kan \cite{DK84} when $\cO$ is a set, and by Farjoun \cite{Far87} when $\cO$ can be a proper class. 
  
  \begin{prop}[\cite{Far87}, Proposition 1.3] \label{EquivModelStructure}
  Let $\cO\sse \cM$ be a collection of orbits.
  Then $\cM$ admits an $\cO$-equivariant model structure.
  \end{prop}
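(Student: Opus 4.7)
The plan is to exhibit the $\cO$-equivariant structure as a cofibrantly generated model structure in a generalized sense, taking as (possibly proper) generating classes
\[ \cI_\cO = \{O \otimes K \to O \otimes L : O \in \cO,\, (K\to L) \in \cI_{\sSet}\}, \qquad \cJ_\cO = \{O \otimes K \to O \otimes L : O \in \cO,\, (K\to L) \in \cJ_{\sSet}\}, \]
with cofibrations defined by the left lifting property against the candidate trivial fibrations. The adjunction $O\otimes -\dashv \hom(O,-)$ immediately identifies maps with the right lifting property against $\cI_\cO$ (resp.\ $\cJ_\cO$) as exactly the trivial fibrations (resp.\ fibrations) of the $\cO$-equivariant structure, and two-out-of-three and retract closure of weak equivalences are inherited pointwise from $\sSet$.

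The functorial factorizations require a modified small object argument, since $\cI_\cO$ and $\cJ_\cO$ may be proper classes when $\cO$ is. Local smallness of $\cO$ supplies, for every object $M$, a set $\cO_M\sse\cO$ through which all attaching maps from $\cO$ to $M$ factor, so that at each transfinite stage only a set-sized family of cells is attached. Axiom Q3 then provides a bounding ordinal $c$ at which the construction converges, yielding the factorizations.

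The substantive input is that every $\cJ_\cO$-cell complex $X\to Y$ is a weak equivalence in the $\cO$-equivariant sense. For $O'\in\cO$ fixed, axiom Q2 interchanges $\hom(O',-)$ with the transfinite colimit up to weak equivalence, so it suffices to check each individual pushout step $X_a\to X_{a+1}$. Writing a horn inclusion $K\to L\in \cJ_{\sSet}$ as a finite $\cI_{\sSet}$-cell complex and using axiom Q1 iteratively, one identifies the square obtained by applying $\hom(O',-)$ as a composite of homotopy pushouts in $\sSet$; since $K\to L$ is a trivial cofibration in $\sSet$, the top horizontal leg in each such pushout is a weak equivalence, and hence so is the vertical leg $\hom(O',X_a)\to \hom(O',X_{a+1})$. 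The remaining lifting axiom---that acyclic cofibrations lift against fibrations---then follows from the classical retract trick applied to the $\cJ_\cO$-cell/fibration factorization together with two-out-of-three.

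The main obstacle is verifying that the combination of local smallness of $\cO$ and axiom Q3 truly suffices to carry out the small object argument in this proper-class setting; the set-theoretic bookkeeping (which set of cells to attach at each ordinal stage, and the verification that the construction stabilizes at $c$) is the delicate heart of the proof, and is precisely what axiom Q3 is engineered to enable.
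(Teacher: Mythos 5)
The paper does not actually prove this proposition: it is imported from the literature (Dwyer--Kan \cite{DK84} when $\cO$ is a set, Farjoun \cite{Far87} when $\cO$ may be a proper class), so the benchmark is their argument. Your outline follows the same strategy as that cited proof: generating classes $\cI_\cO$ and $\cJ_\cO$ of the form $O\otimes K\to O\otimes L$, identification of fibrations and trivial fibrations through the adjunction $O\otimes(-)\dashv\hom(O,-)$, and a modified small object argument in which local smallness of $\cO$ keeps each attaching stage set-sized and Q3 supplies the bounding ordinal at which the construction stabilizes. That part is faithful to the standard argument.

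There is, however, a genuine gap at the substantive step, namely that every $\cJ_\cO$-cell complex is an $\cO$-equivariant weak equivalence. After decomposing a horn inclusion $K\to L$ into boundary inclusions and applying Q1 iteratively, the top legs of the individual homotopy pushouts are the maps $\hom(O',O\otimes\partial\Delta[m])\to\hom(O',O\otimes\Delta[m])$, which are not weak equivalences, so the claim that ``the top horizontal leg in each such pushout is a weak equivalence'' fails as stated. If instead you intend the outer pasted square, then the needed input is that $\hom(O',O\otimes K)\to\hom(O',O\otimes L)$ is a weak equivalence, and this does not follow merely from $K\to L$ being a trivial cofibration: the composite functor $\hom(O',O\otimes(-))$ is not yet known to preserve weak equivalences --- that is essentially what is being proved. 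The standard repair, and the one used in \cite{DK84}, is to observe that $\Lambda^k[n]\to\Delta[n]$ is a simplicial strong deformation retract; since $O\otimes(-)$ and $\hom(O',-)$ are simplicial functors, they carry it to a simplicial homotopy equivalence, hence a weak equivalence in $\sSet$. Combining this with the pasted homotopy pushout from Q1 gives that each stage $\hom(O',X_a)\to\hom(O',X_{a+1})$ is a weak equivalence, and Q2 handles the transfinite composition. With that ingredient inserted, the remainder of your outline (retract argument for the lifting axiom, two-out-of-three, and the set-theoretic bookkeeping via local smallness and Q3) goes through as in Farjoun's proof.
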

  
  \subsection{Chorny's theorem}\label{SecChorny}
  Let $\cM$ be a bicomplete $\sSet$-enriched category and $\cO$ be a collection of orbits in $\cM$. 
  Chorny's theorem compares the $\cO$-equivariant model structure on $\cM$ with the $\cO$-relative model structure on $\cP(\cM)$, and is used in our proof of Theorem \ref{MainThm}. 
  
  Recall the definition of $\cP(\cM)$. 
  \begin{defn}
  A functor $\cM^\opp\to \sSet$ is \emph{small} if it is the left Kan extension of a functor $\cJ^\opp\to \sSet$, where $\cJ$ is a small full subcategory of $\cM$. We denote by $\cP(\cM)$ the category of small functors from $\cM^\opp\to \sSet$. 
  \end{defn}
  As stated in \cite{DL07}, small functors can be alternatively characterized as small weighted colimits of representable functors. 
  
  Chorny \cite{Ch14} defined the $\cO$-relative model structure on $\cP(\cM)$, denoted $\cP(\cM,\cO)$. 
  A morphism $X\to Y$ in $\cP(\cM,\cO)$ is a weak equivalence (resp. fibration) if for every $O\in \cO$, the induced map $X(O)\to Y(O)$ is a weak equivalence (resp. fibration) in $\sSet$ with the standard model structure. 
  As discussed in \cite{Ch14}, Proposition 2.8, the $\cO$-relative model structure on $\cP(\cM)$ is the same as the $\{\hom(-,O):O\in \cO\}$-equivariant model structure on $\cP(\cM)$. 
  
  Now we can state Chorny's theorem. 
  \begin{thm}[\cite{Ch14}, Theorem 3.1]\label{Chorny}
  Let $\cM$ be a bicomplete $\sSet$-enriched category and $\cO$ be a full subcategory of orbits in $\cM$. 
  Then there is a Quillen equivalence 
  $$Z:\cP(\cM,\cO)\rls \cM:Y$$
  where $Y$ sends an object $M\in \cM$ to the functor $\hom(M,-)$ corepresented by $M$, and $Z=\Id_\cM\ot_\cM -$ is the coend with the identity functor. 
  \end{thm}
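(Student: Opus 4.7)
The plan is to verify first that $(Z, Y)$ is a Quillen adjunction and then upgrade to a Quillen equivalence by analyzing the derived unit on cellular cofibrant objects of $\cP(\cM, \cO)$. The adjunction $Z\dashv Y$ is the standard coend--Yoneda adjunction:
\[
\Map_\cM(ZF, M) \cong \int_N \Map_\sSet(F(N), \hom(N, M)) \cong \Map_{\cP(\cM)}(F, YM).
\]
By construction, $YM_1\to YM_2$ evaluated at $O\in\cO$ is the map $\hom(O,M_1)\to \hom(O,M_2)$, so $Y$ preserves and reflects the $\cO$-equivariant weak equivalences and fibrations; in particular $Y$ is a right Quillen functor, making $(Z,Y)$ a Quillen pair.

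Since $Y$ preserves and reflects all weak equivalences, to promote this to a Quillen equivalence it suffices to show the unit $\eta_F\colon F\to YZF$ is a weak equivalence in $\cP(\cM,\cO)$ for every cofibrant $F$. By the small object argument, every cofibrant $F$ is a retract of a transfinite cell complex built from the generating cofibrations $\cI_{\cP(\cM,\cO)}=\{YO\ot K\to YO\ot L:O\in\cO,\ (K\to L)\in\cI_\sSet\}$, and I would prove the claim by transfinite induction on the cellular filtration. For the base case $F = YO\ot K$, the co-Yoneda lemma gives $Z(YO)=O$, and since $Z$ commutes with copowers, $Z(YO\ot K)=O\ot K$; evaluating $\eta_F$ at $O'\in\cO$ then yields the canonical assembly map $\hom(O',O)\ot K\to \hom(O',O\ot K)$. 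This is an isomorphism when $K=\Delta[0]$, and for general $K$ it is a weak equivalence after building $K$ cellularly from its simplices and invoking Q1 and Q2.

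For the inductive step, suppose $F_{a+1}$ is obtained from $F_a$ by pushout along $YO\ot K\to YO\ot L$. Pushouts in $\cP(\cM)$ are computed pointwise, so at $O'\in\cO$ the square for $F_{a+1}(O')$ is a homotopy pushout in $\sSet$, because $\hom(O',O)\ot K\to \hom(O',O)\ot L$ is a cofibration. On the $\cM$ side, $Z$ preserves the pushout, and axiom Q1 applied to $\hom(O',-)$ identifies $\hom(O',ZF_{a+1})$ as the homotopy pushout of the other square. The inductive hypothesis and the base case show that the comparison between the two squares is a weak equivalence on three corners, hence by the gluing lemma for homotopy pushouts the induced map on the fourth corner $F_{a+1}(O')\to \hom(O',ZF_{a+1})$ is a weak equivalence. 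For a transfinite composite $F=\colim_a F_a$, pointwise sequential colimits in $\sSet$ preserve weak equivalences, while axiom Q2 gives $\colim_a\hom(O',ZF_a)\xrightarrow{\sim}\hom(O',ZF)$; combining these extends the weak equivalence to $F$. Retracts finish the argument.

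The main obstacle I anticipate is size control: because $\cM$ may be very large and $\cO$ may be a proper class, cell complexes in $\cP(\cM)$ of unbounded transfinite length need not produce \emph{small} presheaves. Local smallness of $\cO$ controls the set of orbits involved in attaching cells to any fixed small presheaf, and axiom Q3, which guarantees that sequential colimits past a certain ordinal $c$ are preserved strictly by each $\hom(O',-)$, is what allows the cell complex to be truncated at $c$ and remain within $\cP(\cM)$. Once this bookkeeping is in place, axioms Q1--Q3 encode precisely the compatibility of $\hom(O',-)$ with homotopy pushouts and sequential colimits needed to transport the unit through the cellular construction.
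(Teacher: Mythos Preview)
The paper does not contain its own proof of this statement. Theorem~\ref{Chorny} is quoted from Chorny's paper \cite{Ch14} as an external input (``Now we can state Chorny's theorem''), and is then used as a black box in the proof of Theorem~\ref{Comparison}. So there is no in-paper argument to compare your proposal against.

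That said, your sketch is a plausible outline of how such a proof goes, and it correctly identifies that axioms Q1--Q3 are exactly what is needed to push the derived unit through a cellular induction. A few minor comments: the statement in the paper has a typo---$Y$ should send $M$ to the represented functor $\hom(-,M)$, not the corepresented one (as is confirmed by the computation $(YRn)(m)=\cM(m,Rn)$ in the proof of Theorem~\ref{Comparison}); you silently read it the right way. Your remark about size control is also on point: the existence of the $\cO$-relative model structure on $\cP(\cM)$ and the factorizations therein are nontrivial when $\cO$ is a proper class, and this is precisely where Chorny's paper does real work (a generalized small object argument tailored to small presheaves). Your paragraph gestures at this but does not carry it out; if you were actually writing the proof rather than citing it, that would be the substantive part.
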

  
  \section{Comparison between $\cO$-equivariant model structures}
  In this section, we prove a comparison result between $\cO$-equivariant model structures on different $\sSet$-enriched categories. 
  
  \begin{thm}\label{Comparison}
  Let $\cM$, $\cN$ be two simplicial categories with an adjunction $$L:\cM\rls \cN:R.$$
  Let $\cO_\cM$ be a collection of orbits in $\cM$ and $\cO_\cN$ be a collections of orbits in $\cN$, in the sense of Definition \ref{OrbitsDefn}.
  Assume that $L \cO_\cM\sse \cO_\cN$ and $R\cO_\cN\sse \cO_\cM$, and that $(L,R)$ restricts to an equivalence of categories between $\cO_\cM$ and $\cO_\cN$. 
  Then $(L,R)$ defines a Quillen equivalence between $\cM$ with the $\cO_\cM$-equivariant model structure and $\cN$ with the $\cO_\cN$-equivariant model structure.
  \end{thm}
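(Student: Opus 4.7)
The plan is to establish the Quillen equivalence by verifying three things: that $L\dashv R$ is a Quillen adjunction, that $R$ preserves and reflects weak equivalences, and that the derived unit is a weak equivalence on cofibrant objects. The enriched adjunction provides a natural isomorphism $\hom_\cM(O, RY)\cong \hom_\cN(LO, Y)$; combined with the hypothesis $L\cO_\cM\sse \cO_\cN$, this immediately shows $R$ sends (trivial) fibrations to (trivial) fibrations and, more generally, preserves all weak equivalences. For the reflection of weak equivalences, the hypothesis that $L\dashv R$ restricts to an equivalence $\cO_\cM\simeq \cO_\cN$ implies that every $O'\in \cO_\cN$ is isomorphic in $\cN$ to some $LO$ with $O\in \cO_\cM$; hence $\hom_\cN(O', f)\cong \hom_\cN(LO, f)\cong \hom_\cM(O, Rf)$ is a weak equivalence in $\sSet$ whenever $Rf$ is one in $\cM$.

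Since $R$ preserves all weak equivalences, the derived unit $M\to R((LM)^{f})$ at a cofibrant $M$ (with $(LM)^f$ a fibrant replacement of $LM$) is a weak equivalence if and only if the ordinary unit $\eta_M: M\to RLM$ is. By the standard Quillen equivalence criterion, it therefore suffices to show that $\eta_M$ is a weak equivalence for every cofibrant $M$, i.e. that the natural map $\hom_\cM(O, M)\to \hom_\cN(LO, LM)$ is a weak equivalence in $\sSet$ for every $O\in \cO_\cM$. I proceed by transfinite induction on a cellular presentation of $M$: a cofibrant object is a retract of a transfinite composite of pushouts of generating cofibrations $O'\ot K\to O'\ot L$ with $O'\in \cO_\cM$ and $K\to L\in \cI_{\sSet}$. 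Since $L$ is a simplicial left adjoint, it preserves copowers and pushouts, sending such cells to the analogous generating cofibrations of $\cN$. Axiom Q1 applied to $\cO_\cM$ and $\cO_\cN$ then shows that both $\hom_\cM(O, -)$ and $\hom_\cN(LO, -)$ turn the attachments into homotopy pushouts in $\sSet$, while axiom Q2 handles transfinite composites; naturality and the standard 2-out-of-3 property for homotopy pushouts and homotopy colimits reduce the inductive step to the base case of a single cell $M=O'\ot K$.

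The main obstacle is precisely this cell-level comparison: showing that $\hom_\cM(O, O'\ot K)\to \hom_\cN(LO, LO'\ot K)$ is a weak equivalence for $O, O'\in \cO_\cM$ and $K$ a domain or codomain of a generating cofibration of $\sSet$. The enriched equivalence on orbits gives the isomorphism $\hom_\cM(O, O')\cong \hom_\cN(LO, LO')$, but $\hom(O, -)$ does not commute with copowers $-\ot K$ in general, so an extra argument is needed. I expect to resolve this by a secondary induction on the cell structure of $K$, invoking the orbit axioms once more to propagate the equivalence through the copowers and using that each inclusion $\De[0]\hra \De[n]$ is a weak equivalence in $\sSet$. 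A more structural alternative is to invoke Chorny's theorem (Theorem \ref{Chorny}) on both sides to recast the entire problem inside the presheaf categories $\cP(\cM,\cO_\cM)$ and $\cP(\cN,\cO_\cN)$, where the induced adjunction is restriction and Kan extension along $L^{\opp}$ and the copower subtlety dissolves because everything is tested against evaluations at orbits.
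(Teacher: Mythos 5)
Your opening moves are correct: the enriched adjunction together with $L\cO_\cM\sse\cO_\cN$ does make $(L,R)$ a Quillen adjunction, the restricted equivalence of orbit categories does make $R$ preserve and reflect all weak equivalences, and this legitimately reduces the problem to showing the ordinary unit $\eta_M\colon M\to RLM$ is a weak equivalence for every cofibrant $M$. The genuine gap is exactly the step you flag as the ``main obstacle,'' and neither of your proposed fixes closes it. The secondary induction on the cells of $K$ can handle $K=\De[n]$: since $O'\ot-$ and $LO'\ot-$ are left Quillen for the two equivariant structures and $O'\ot\De[0]\isom O'$, one reduces to $\hom_\cM(O,O')\to\hom_\cN(LO,LO')$, which is an isomorphism by the restricted equivalence. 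But both your primary cellular induction on $M$ and the skeletal induction on $K$ bottom out at the initial object: the very first cell attachment is a pushout along $O'\ot\der\De[0]=\emptyset_\cM\to O'\ot\De[0]$, so the gluing comparison needs $\hom_\cM(O,\emptyset_\cM)\to\hom_\cN(LO,\emptyset_\cN)$ to be a weak equivalence, and nothing in Q1--Q3 (which only constrain how $\hom(O,-)$ interacts with pushouts along $O'\ot K\to O'\ot L$ and their transfinite composites) nor in the stated hypotheses gives any control over $\hom(O,\emptyset_\cM)$ versus $\hom_\cN(LO,\emptyset_\cN)$. So the induction has no base case, and ``invoking the orbit axioms once more'' is not an available move; the copower issue you identify is real and is not resolved by what you sketch.

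Your fallback --- recasting everything in $\cP(\cM,\cO_\cM)$ and $\cP(\cN,\cO_\cN)$ via Chorny's theorem --- is in fact the paper's actual proof, but as written it is only a pointer, and the content lies in the points you gloss over. First, one must check that the relevant functors even preserve smallness of presheaves: the paper does this (Lemma \ref{PshAdjoint}) by noting $\cN(L-,X)\isom\cM(-,RX)$ is representable, which yields an adjunction between the categories of small functors and, crucially, identifies the left Kan extension along $L$ with restriction along $R$, so the induced adjunction is $R^*\dashv L^*$ (not ``Kan extension along $L^{\opp}$'' in any naive sense). Second, $(R^*,L^*)$ is a Quillen equivalence for the relative structures simply because fibrations and weak equivalences are detected by evaluation at orbits and the orbit categories correspond under $(L,R)$ (Lemma \ref{PshQuillen}). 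Finally, one checks the natural isomorphisms $LZ\isom ZR^*$ and $YR\isom L^*Y$ and concludes by the two-out-of-three property of Quillen equivalences applied to the square with the two instances of Theorem \ref{Chorny}. The payoff of that route is precisely that it never requires the unit computation your direct argument gets stuck on; if you want to salvage your approach, you would need an additional hypothesis or argument controlling $\hom(O,-)$ on the initial object and on copowers, which the abstract setting does not provide.
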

  
  We prove the theorem in several steps. Recall that $\cP(\cM)$ is the category of small functors from $\cM^\opp$ to $\sSet$. 
  \begin{lemma}\label{PshAdjoint}
  In the setting of Theorem \ref{Comparison}, we have an adjunction $$R^*:\cP(\cM)\rls \cP(\cN):L^*.$$ where $L^*$ and $R^*$ are restrictions along $L$ and $R$, respectively. 
  \end{lemma}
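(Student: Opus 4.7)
The plan is to verify two claims separately: that $L^*$ and $R^*$ are well-defined as functors between the small-presheaf categories, and that they assemble into the claimed adjunction.

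The adjunction identity is largely formal. Any $\sSet$-enriched adjunction $L \dashv R$ between bicomplete $\sSet$-categories induces a precomposition adjunction on the corresponding functor categories valued in $\sSet$, whose unit and counit are obtained pointwise from the unit $\eta : \Id_\cM \Rightarrow RL$ and counit $\ep : LR \Rightarrow \Id_\cN$ of $L \dashv R$. This precomposition adjunction descends to $\cP(\cM)$ and $\cP(\cN)$ as soon as the restrictions are known to preserve smallness of functors.

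For smallness, I would invoke the characterization recalled after \cite{DL07}: a functor is small if and only if it is a small $\sSet$-weighted colimit of representables. Since precomposition preserves weighted colimits pointwise, it suffices to analyze the action on representables. Using $L \dashv R$, we have
\[
R^*\hom_\cM(m,-)(n) = \hom_\cM(m,Rn) \cong \hom_\cN(Lm,n),
\]
so $R^*$ sends the representable at $m$ to the representable at $Lm$. Consequently $R^*$ sends any small weighted colimit of representables on $\cM$ to a small weighted colimit of representables on $\cN$, and thus preserves smallness.

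The main obstacle is the corresponding claim for $L^*$, since $L^*\hom_\cN(n,-) = \hom_\cN(n, L(-))$ is not representable on $\cM$ in general. My plan is to avoid a direct smallness check and instead construct $L^*$ on small presheaves abstractly as the adjoint of $R^*$. The functor $R^* : \cP(\cM) \to \cP(\cN)$ is cocontinuous as a precomposition, and accessible under the bicompleteness hypotheses on $\cM$ and $\cN$ together with the small-functor framework developed in \cite{Ch14}; the enriched adjoint functor theorem therefore yields an adjoint $\cP(\cN) \to \cP(\cM)$. Evaluating this adjoint at $G \in \cP(\cN)$ via Yoneda against the representable $\hom_\cM(m,-)$ returns $G(Lm)$, identifying it with precomposition by $L$. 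This simultaneously shows $L^* G \in \cP(\cM)$ and yields the stated adjunction.
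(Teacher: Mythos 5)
Your handling of the easy half is essentially the paper's: the paper constructs the left Kan extension $L_*$ along $L$ on small functors and identifies it with restriction along $R$ by a comma-category computation, which is your ``restriction sends representables to representables, then extend by cocontinuity'' argument in different clothing. The genuine gap is in the other half, the functor $L^*$. The categories $\cP(\cM)$ and $\cP(\cN)$ are categories of small functors over \emph{large} categories: they are cocomplete and locally small, but in general neither complete nor accessible (the existence of limits in $\cP(\cM)$ is precisely the subject of \cite{DL07}), so there is no enriched adjoint functor theorem that produces a right adjoint to $R^*$ from cocontinuity together with an unargued ``accessibility'' of $R^*$; neither the bicompleteness of $\cM$ and $\cN$ nor the framework of \cite{Ch14} supplies such a theorem, and no solution-set-type condition is verified in your proposal.

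Moreover the detour is circular. If a right adjoint $T$ of $R^*$ existed, your own Yoneda evaluation forces $TG(m)\isom G(Lm)$ naturally in $m$, so $T$ would have to be precomposition with $L$; hence the right adjoint exists if and only if restriction along $L$ preserves smallness, which is exactly the ``direct smallness check'' you set out to avoid. That check is where the hypothesis that $L$ admits a right adjoint actually does its work, and it is how the paper proceeds: it applies \cite{DL07}, Proposition 3.3, whose hypothesis is verified by the one-line observation that $\cN(L-,X)\isom \cM(-,RX)$ is representable, hence small, for every $X\in\cN$, and only then identifies the resulting left adjoint with the other restriction functor. (There is also a variance wobble: with the paper's stated definition of $\cP(\cM)$ as small functors $\cM^\opp\to\sSet$, the representable at $m$ is $\hom(-,m)$, and it is restriction along $L$, not along $R$, that carries representables to representables; your computation uses corepresentables. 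This ambiguity is inherited from the statement and the paper's own conventions, so I do not count it against you, but the substantive missing step remains the smallness of the second restriction functor, which your proposal neither proves nor legitimately bypasses.)
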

  
  \begin{proof}
  It is not even obvious that $L^*$ and $R^*$ are well-defined functors. 
  By definition, every object in $\cP(\cM)$ is the left Kan extension along the inclusion functor of a small full subcategory of $\cM$. So we have a well-defined pushforward functor $L_*:\cP(\cM)\to \cP(\cN)$ given by left Kan extension along $L$. 
  We would like to apply \cite{DL07}, Proposition 3.3 to show that $L^*$ is well defined and is the right adjoint of $L_*$. 
  So we need to verify the condition that $\cN(L-,X):\cM^\opp\to \sV$ is small for every object $X\in \cN$. 
  In fact, since $L$ is a left adjoint, $\cN(L-,X)\isom\cM(-,RX)$ is a representable functor, and is small. 
  
  Now we have an adjunction $$L_*:\cP(\cM)\rls \cP(\cN):L^*.$$ We would like to prove that $L_*$ and $R^*$ are naturally equivalent. 
  To show this, consider an object $\Lan_i F$ in $\cP(\cM)$, where $i:\cJ\to \cM$ is the inclusion of a small full subcategory, and $F\in \cP(\cJ)$.
  We have
  \begin{align*}
  L_*\Lan_i F=\Lan_{Li} F=\colim_{Li a\to -} F(a)\isom \colim_{i a\to R-} F(a)=R^* \Lan_i F.
  \end{align*}
  \end{proof}
  
  Recall that the model category $\cP(\cM,\cO_\cM)$ is the category $\cP(\cM)$ equipped with the $\cO_\cM$-relative model structure where a morphism is a weak equivalence (resp. fibration) if and only if it is a weak equivalence (resp. fibration) evaluated at every object in $\cO_\cM$.
  \begin{lemma}\label{PshQuillen}
  In the setting of Theorem \ref{Comparison}, we have a Quillen equivalence $$R^*:\cP(\cM,\cO_\cM)\rls \cP(\cN,\cO_\cN):L^*.$$
  \end{lemma}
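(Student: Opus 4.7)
\emph{Proof plan.} By Lemma \ref{PshAdjoint} we already have the adjunction $R^* \dashv L^*$, so the task is to verify it is a Quillen equivalence. I first check the Quillen pair property. Both model structures are defined by evaluation at orbits, and for any $F \in \cP(\cN)$ and $O' \in \cO_\cM$ we have $(L^* F)(O') = F(LO')$ with $LO' \in \cO_\cN$ by hypothesis; hence $L^*$ preserves fibrations and trivial fibrations. The same computation shows that $L^*$ preserves \emph{all} weak equivalences, and since $L\colon \cO_\cM \to \cO_\cN$ is essentially surjective (being part of an equivalence of categories), every $O \in \cO_\cN$ is isomorphic to some $LO'$ with $O' \in \cO_\cM$, so $L^*$ also reflects weak equivalences.

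The heart of the argument is to show that for every cofibrant $X \in \cP(\cM, \cO_\cM)$ the unit $\eta_X\colon X \to L^* R^* X$ is a weak equivalence; in fact I claim $(\eta_X)_{O'}$ is an isomorphism in $\sSet$ for every $O' \in \cO_\cM$. For a generating cell $X = \cM(-, O) \ot K$ with $O \in \cO_\cM$ and $K \in \sSet$, the computation in the proof of Lemma \ref{PshAdjoint} gives $R^* \cM(-, O) \isom \cN(-, LO)$, and a further application of the adjunction $L \dashv R$ identifies $(\eta_X)_{O'}$ with the map $\cM(O', O) \ot K \to \cN(LO', LO) \ot K$ induced by the action of $L$ on morphisms. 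Since $L$ restricts to a fully faithful functor on $\cO_\cM$ and $O, O' \in \cO_\cM$, this map is a bijection. For an arbitrary cofibrant $X$, built as a retract of a transfinite cell complex with such cells, the same conclusion follows by induction: evaluation at $O'$ preserves colimits, and isomorphisms in $\sSet$ are stable under pushouts and transfinite compositions.

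For the conclusion, given cofibrant $X \in \cP(\cM, \cO_\cM)$ and any $f\colon R^* X \to Y$ in $\cP(\cN)$ with adjoint $\tilde f\colon X \to L^* Y$, the standard adjunction formula $\tilde f = L^* f \circ \eta_X$, together with the facts that $\eta_X$ is a weak equivalence and that $L^*$ both preserves and reflects weak equivalences, shows that $\tilde f$ is a weak equivalence if and only if $f$ is. This verifies the standard criterion for a Quillen equivalence. The main subtlety is the cell induction for the unit, where one must track that the maps in question are honest isomorphisms (not merely weak equivalences) so that they behave well under arbitrary colimits.
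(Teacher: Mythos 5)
Your proposal is correct in outline (granting Lemma \ref{PshAdjoint}, on which both your argument and the paper's rest), but it takes a genuinely different route. The paper never looks at the unit or at cell structures: it checks the Quillen pair exactly as you do, and then observes directly that a map $f\colon M\to L^*N$ is a weak equivalence iff $M(O)\to N(LO)$ is one for every $O\in\cO_\cM$, that its adjoint $R^*M\to N$ is a weak equivalence iff $M(RO^\p)\to N(O^\p)$ is one for every $O^\p\in\cO_\cN$, and that these two objectwise conditions coincide because $(L,R)$ restricts to an equivalence $\cO_\cM\simeq\cO_\cN$ (isomorphic orbits give isomorphic evaluations); neither cofibrancy of $M$ nor fibrancy of $N$ is actually used. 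You instead verify the derived-unit criterion: $L^*$ preserves and reflects weak equivalences, and the unit $X\to L^*R^*X$ is an objectwise isomorphism on orbits for cofibrant $X$, by cell induction over the generating cofibrations $\cM(-,O)\ot K$. This is a standard and robust template, and it makes visible where cofibrancy enters; but it needs several inputs that the paper's short matching argument avoids: the identification $R^*\cM(-,O)\isom\cN(-,LO)$ and of the unit with the action of $L$ on hom-objects (both taken from the proof of Lemma \ref{PshAdjoint}); that $L$ is fully faithful on orbits in the \emph{enriched} sense, i.e.\ that $\cM(O^\p,O)\to\cN(LO^\p,LO)$ is an isomorphism of simplicial sets, which requires reading the hypothesis ``$(L,R)$ restricts to an equivalence'' as saying the restricted unit and counit of the simplicial adjunction are isomorphisms (the paper's version only needs essential surjectivity of $L$ and $R$ on orbits); that cofibrant objects of Chorny's class-cofibrantly generated structure are retracts of $\cO_\cM$-cell complexes; and that evaluation at an orbit and $L^*R^*$ preserve the relevant pushouts and transfinite compositions (true, since colimits of small functors are computed pointwise and $R^*$ is a left adjoint, but worth saying). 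With those points made explicit your induction closes, and the conclusion via $\tilde f=L^*f\circ\eta_X$ and two-out-of-three is valid --- in fact, like the paper's argument, it establishes the adjoint criterion for arbitrary $Y$, not only fibrant ones. The trade-off: the paper's proof is shorter and uses weaker hypotheses on the orbit equivalence; yours isolates a stronger statement (the unit is an isomorphism on orbits for cellular objects) that could be reused elsewhere.
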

  \begin{proof}
  By Lemma \ref{PshAdjoint}, $(R^*,L^*)$ is a pair of adjoint functors. 
  
  We first prove that $(R^*,L^*)$ is a Quillen adjunction, i.e. $L^*$ preserves fibrations and trivial fibrations. 
  A morphism $\eta$ in $\cP(\cN,\cO_\cN)$ is a fibration (resp. trivial fibration) if and only if $\eta(O)$ is a fibration (resp. trivial fibration) for every $O\in \cO_\cN$. 
  Its restriction $L^* \eta$ in $\cP(\cM,\cO_\cM)$ is a fibration (resp. trivial fibration) if and only if $L^* \eta(O^\p)=\eta(LO^\p)$ is a fibration (resp. trivial fibration) for every $O^\p\in \cO_\cM$. We have $L\cO_\cM\sse \cO_\cN$, so if $\eta$ is a fibration (resp. trivial fibration), then $L^*\eta$ is a fibration (resp. trivial fibration). 
  
  Then we prove that $(R^*,L^*)$ is a Quillen equivalence, i.e. when $M\in \cP(\cM,\cO_\cM)$ is a cofibrant object and $N\in \cP(\cN,\cO_\cN)$ is a fibrant object, a morphism $M\to L^*N$ is a weak equivalence if and only if its adjoint $R^*M\to N$ is a weak equivalence.
  The morphism $M\to L^*N$ is a weak equivalence if and only if $M(O)\to L^*N(O)=N(LO)$ is a weak equivalence for every object $O\in \cO_\cM$.
  On the other hand, $R^*M\to N$ is a weak equivalence if and only if $M(RO^\p)=R^*M(O^\p)\to N(O^\p)$ is a weak equivalence for every object $O^\p\in \cO_\cN$.
  Therefore if for every $O\in \cO_\cM$ we can find $O^\p\in \cO_\cN$ such that $RO^\p\isom O$, and for every $O^\p\in \cO_\cN$ we can find $O\in \cO_\cM$ such that $LO\isom O^\p$, the above two conditions coincide.
  This is the case because $(L,R)$ defines an equivalence of categories between $\cO_\cM$ and $\cO_\cN$.
  \end{proof}
  
  \begin{proof}[Proof of Theorem \ref{Comparison}]
  By Chorny's theorem, we have the following diagram.
  \[
   \xymatrix{\cM\ar@<.5ex>[r]^{L}\ar@<.5ex>[d]^{Y} &\cN\ar@<.5ex>[l]^{R}\ar@<.5ex>[d]^{Y}\\
   \cP(\cM,\cO_\cM)\ar@<.5ex>[u]^{Z}\ar@<.5ex>[r]^{R^*}& \cP(\cN,\cO_\cN)\ar@<.5ex>[u]^{Z}\ar@<.5ex>[l]^{L^*}}
  \]
  
  The vertical $(Z,Y)$ pairs are Quillen equivalences by  Theorem \ref{Chorny}, and the pair $(R^*,L^*)$ is a Quillen equivalence by Lemma \ref{PshQuillen}. 
  To show that $(L,R)$ is a Quillen equivalence, we apply the 2-out-of-3 property of Quillen equivalences. 
  So we need to show that $LZ\isom ZR^*$, and $YR\isom L^*Y$. 
  
  Consider any object $M\in \cP(\cM,\cO_\cM)$. 
  To show that $LZM\isom ZR^*M$, we only need to show that they represent the same functor in $\cN$. 
  For any $n\in \cN$, we have 
  \begin{align*}
  \cN(LZM,n) \isom \cM(ZM, Rn) \isom \cP(\cM,\cO_\cM) (M, YRn)
  \end{align*}
  and
  \begin{align*}
  \cN(ZR^*M,n)\isom \cP(\cN,\cO_\cN)(R^*M, Yn) \isom \cP(\cM,\cO_\cM)(M, L^*Yn).
  \end{align*}
  It remains to prove that $YR\isom L^*Y$.
  For any $m\in \cM$, we have
  \begin{align*}
  (YRn)(m)=\cM(m, Rn)\isom \cN(Lm,n)=(L^*Y n)(m).
  \end{align*}
  \end{proof}
  
  \section{The $\cO$-equivariant model structure on $\Cat^I$}
  In this section, we prove Theorem \ref{MainThm} for $\Cat^I$. 
  
  Let $I$ be a small category and $\cO$ be a locally small class of objects in $\Set^I$ such that $\colim_I O=*$ for every $O\in \cO$. 
  Clearly $\Set^I$ embeds naturally in $\sSet^I$, $\Cat^I$, $\Ac^I$ and $\Pos^I$. 
  Farjoun \cite{Far87} proved that $\cO$ is a collection of orbits in $\sSet^I$. Therefore $\sSet^I$ admits the $\cO$-equivariant model structure. 
  
  In order to construct the $\cO$-equivariant model structure on $\Cat^I$, we need $\Cat^I$ to be $\sSet$-enriched.
  We know that $\Cat$ is Carterian closed, i.e. enriched over itself. 
  Let $\ihom$ denote the internal hom of $\Cat$ . 
  The nerve functor $N:\Cat\to \sSet$ gives the usual $\sSet$-enriched structure on $\Cat$, but it is not suitable for our purpose. 
  We consider the $\sSet$-enriched on $\Cat$ given by the strict monoidal functor $\Ex^2 N:\Cat\to \sSet$. 
  Let $\hom$ denote $\Ex^2 N \ihom$. 
  
  Now we show that $\Cat$ is a bicomplete $\sSet$-enriched category. 
  It is well-known that underlying category of $\Cat$ is bicomplete, so we only need to show that the $\sSet$-enriched structure is powered and copowered. 
  For $C\in \cC$ and $X\in \sSet$, the copower $C\ot X$ is $C\t c\Sd^2 X$ and the power $[X,C]$ is $\ihom(c\Sd^2 X,C)$.
  The correctness is easily verified. 
  Therefore $\Cat$ is a bicmoplete $\sSet$-enriched category. 
  
  Slightly abusing notation, we use $\ihom$ (resp. $\hom$) to denote the hom functor in $\Cat^I$ induced from $\ihom$ (resp. $\hom$) in $\Cat$. 
  
  \begin{thm}[Theorem \ref{MainThm} for $\Cat^I$]\label{MainThmCat}
  Let $I$ be a small category and $\cO$ be a locally small class of objects in $\Set^I$ such that $\colim_I O=*$ for every $O\in \cO$. 
  Then $\Cat^I$ admits the $\cO$-equivariant model structure, and there is a Quillen equivalence $$c\Sd^2: \sSet^I \lrs \Cat^I:\Ex^2 N$$ where both sides are equipped with the $\cO$-equivariant model structures. 
  \end{thm}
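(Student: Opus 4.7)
The plan is in three parts: verify that $\cO$, viewed in $\Cat^I$ via the discrete embedding $\Set \hookrightarrow \Cat$, is a collection of orbits in the sense of Definition \ref{OrbitsDefn}; apply Proposition \ref{EquivModelStructure} to obtain the $\cO$-equivariant model structure on $\Cat^I$; and apply Comparison Theorem \ref{Comparison} to the pointwise Quillen adjunction $c\Sd^2 \dashv \Ex^2 N$ to get the stated Quillen equivalence.

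The key observation driving the verification is the natural isomorphism
$$\hom_{\Cat^I}(O, X) \;=\; \Ex^2 N \ihom_{\Cat^I}(O,X) \;\cong\; \hom_{\sSet^I}(O, \Ex^2 N X)$$
for $O \in \cO \sse \Set^I$ and $X \in \Cat^I$, which holds because $\Ex^2 N$ is a right adjoint and therefore commutes with the end over $I$ and the pointwise powers by the sets $O(i)$ that together compute $\ihom_{\Cat^I}(O, -)$. This identification transports the axioms Q1, Q2, Q3 from $\sSet^I$, where they hold by Farjoun \cite{Far87}, to $\Cat^I$. For Q2 and Q3 I would use that $\Ex^2 N$ preserves filtered colimits (since $\Sd^2 \De[n]$ has finitely many nondegenerate simplices and $c\De[n] = [n]$ is finitely generated), so $\Ex^2 N$ commutes with the continuous transfinite sequences in those axioms, reducing each to its $\sSet^I$ analogue. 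For Q1, the pushout $O \otimes K \to O \otimes L$ in $\Cat^I$ is pointwise $O(i) \times c\Sd^2 K \to O(i) \times c\Sd^2 L$, a Dwyer map between posets at each $i \in I$ since $c\Sd^2 K \to c\Sd^2 L$ is one. Thomason's theorem \cite{Tho80} on pushouts along Dwyer maps produces a levelwise weak equivalence between the $\sSet^I$-pushout of $\Ex^2 N X_a \leftarrow O \times K \to O \times L$ and $\Ex^2 N X_{a+1}$; Farjoun's Q1 applied to the $\sSet^I$-pushout then shows that $\hom_{\Cat^I}(O', X_{a+1})$ is the required homotopy pushout.

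Once $\cO$ is known to be a collection of orbits in $\Cat^I$, Proposition \ref{EquivModelStructure} supplies the $\cO$-equivariant model structure, and Comparison Theorem \ref{Comparison} applies to $(c\Sd^2, \Ex^2 N)$: both functors restrict to the identity on $\cO$ because $c\Sd^2$ preserves discrete simplicial sets and $\Ex^2 N$ preserves discrete categories, so together they furnish an equivalence of categories between $\cO \sse \sSet^I$ and $\cO \sse \Cat^I$. This yields the Quillen equivalence. I expect the main obstacle to be Q1: the delicate point is ensuring that the natural comparison from the strict $\sSet^I$-pushout to the $\Ex^2 N$-image of the $\Cat^I$-pushout is a weak equivalence that is preserved by $\hom_{\sSet^I}(O', -)$, which will use Thomason's Dwyer-map cellular analysis together with suitable cofibrancy of $O'$ in the $\cO$-equivariant model structure on $\sSet^I$.
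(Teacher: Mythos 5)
Your overall skeleton (verify that $\cO$ is a collection of orbits in $\Cat^I$, invoke Proposition \ref{EquivModelStructure}, then get the Quillen equivalence from Theorem \ref{Comparison} since $c\Sd^2$ and $\Ex^2 N$ preserve the levelwise-discrete diagrams in $\cO$) is exactly the paper's, and your identification $\hom_{\Cat^I}(O,X)\cong \hom_{\sSet^I}(O,\Ex^2 N X)$ is correct, since $\Ex^2 N$ preserves limits and $O$ is levelwise a set. The gap is in the verification of Q1--Q3, which you propose to ``transport'' from Farjoun's result for $\sSet^I$. For Q1, Farjoun's axiom controls $\hom_{\sSet^I}(O',-)$ of the \emph{strict} $\sSet^I$-pushout $P$ of $\Ex^2 N X_a \leftarrow O\ot K \to O\ot L$; Thomason's Proposition 4.3 gives you only a \emph{levelwise} weak equivalence $P\to \Ex^2 N X_{a+1}$, and $\hom_{\sSet^I}(O',-)$ has no reason to carry a levelwise weak equivalence to a weak equivalence --- that failure is the whole point of the $\cO$-equivariant (as opposed to projective) structure, exactly as $H$-fixed points fail to preserve nonequivariant equivalences. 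You flag this yourself, but the proposed remedy does not close it: cofibrancy of $O'$ in the $\cO$-equivariant structure on $\sSet^I$ only guarantees that mapping out of $O'$ preserves $\cO$-equivariant equivalences (or equivalences of fibrant objects), and whether $P\to \Ex^2 N X_{a+1}$ is an $\cO$-equivariant equivalence is precisely what is to be proved. The same problem recurs for Q2/Q3: Farjoun's axioms apply to transfinite sequences of orbit-cell attachments \emph{in} $\sSet^I$, and the sequence $\Ex^2 N X_0\to \Ex^2 N X_1\to\cdots$ is not of that form (since $\Ex^2 N$ does not preserve pushouts), nor does $\hom_{\sSet^I}(O,-)$ commute with arbitrary filtered colimits when the $O(i)$ are infinite; an argument using the specific cell structure and $\colim_I O=*$ is still required.

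What the paper does instead, and what your argument is missing, is a strict, combinatorial commutation statement at the level of $\ihom$: Proposition \ref{VQ1} shows that $\ihom(O',-)$ takes the orbit-cell pushout in $\Cat^I$ to an honest pushout in $\Cat$ along a Dwyer map between posets, using Lemma \ref{HomChange} ($\ihom(O',O\t K)\cong\ihom(O',O)\t K$, which needs $\colim_I O'=*$), the explicit description of Dwyer-map pushouts (Lemma \ref{DwyerPushout}), and a connectedness argument on objects; Proposition \ref{VQ2} does the analogous strict commutation for the transfinite composites. Only after the equivariance has been eliminated in this strict way does the paper apply Thomason's Proposition 4.3 and the equivalence $\Id\to\Ex$ --- now inside $\Cat$, where only nonequivariant weak equivalences are needed --- to deduce the homotopy pushout (Proposition \ref{UQ1}) and, together with $N$ and $\Ex$ preserving filtered colimits, Q2/Q3 (Proposition \ref{UQ2}). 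So the ``delicate point'' you defer is not a technical footnote but the core of the proof, and your reduction to Farjoun's $\sSet^I$ axioms cannot replace it. (Minor additional point: you should also record that $\cO$ is locally small in $\Cat^I$, as in Proposition \ref{LocallySmall}, since that is part of Definition \ref{OrbitsDefn}; it follows easily from the adjunction with the forgetful functor to $\Set^I$.)
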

  
  We prove that $\cO$ is a collection of orbits in $\Cat^I$ by first proving analogous orbit axioms Q1-Q3 with $\hom$ replaced with $\ihom$. 
  
  \begin{prop}[Analogue of Q1]\label{VQ1}
  Let \[
\xymatrix{
O \t K \ar[r] \ar[d]  &  X_{a} \ar[d]\\
O \t L \ar[r] &  X_{a+1}\\
}
\]
be a pushout in $\Cat^I$ where $O\in \cO$ and $K\inj L\in \cI_\Cat$. 
Then \[
\xymatrix{
\ihom(O^\p, O \t K) \ar[r] \ar[d]  &  \ihom(O^\p, X_{a}) \ar[d]\\
\ihom(O^\p, O \t L) \ar[r] &  \ihom(O^\p, X_{a+1})\\
}
\]
is a pushout in $\Cat$ for $O^\p\in \cO$. 
  \end{prop}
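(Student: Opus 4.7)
The plan is to reduce the claim to an isomorphism of pushouts in $\Cat$, exploiting the Cartesian closedness of $\Cat^I$, the condition $\colim_I O^\p = *$, and the sieve property of the Dwyer map $K \inj L$.

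First I would establish that $\ihom(O^\p, O \t K) \isom \ihom(O^\p, O) \t K$, and likewise with $L$ in place of $K$, where $\ihom(O^\p, O)$ is a discrete set since $O^\p, O \in \Set^I$. Unfolding the end $\int_i \ihom_\Cat(O^\p(i), (O\t K)(i))$ defining $\ihom$ in $\Cat^I$, an object of $\ihom(O^\p, O \t K)$ is a natural transformation $\al: O^\p \to O \t K$ assigning to each $(i, x) \in \int O^\p$ a pair $(o_{i, x}, k_{i, x}) \in O(i) \t \ob K$. Naturality splits into two parts: the family $(o_{i, x})$ is a natural transformation $O^\p \to O$, while $(k_{i, x})$ is constant on connected components of the Grothendieck construction $\int O^\p$. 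Since $\colim_I O^\p = *$ is equivalent to the connectedness of $\int O^\p$, the second coordinate is a single object of $K$. The same analysis applied to modifications gives the claimed identification of categories. Setting $S = \ihom(O^\p, O)$, the target reduces to showing that the induced functor $\ihom(O^\p, X_a) \cup_{S \t K} (S \t L) \to \ihom(O^\p, X_{a+1})$ is an isomorphism in $\Cat$.

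Second, I would unpack $X_{a+1}$ pointwise as $X_{a+1}(i) = X_a(i) \cup_{O(i) \t K} O(i) \t L$ in $\Cat$ and use the sieve property of $K \to L$ to describe this pushout explicitly. Its objects form the disjoint union $\ob X_a(i) \sqcup (O(i) \t (\ob L \setminus \ob K))$; crucially, there is no morphism in $X_{a+1}(i)$ from the ``new'' part $O(i) \t (\ob L \setminus \ob K)$ to the ``old'' part $\ob X_a(i)$, because $K$ being closed under predecessors in $L$ precludes morphisms $l \to k$ in $L$ with $l \notin \ob K$ and $k \in \ob K$. Morphisms in the reverse direction do exist and factor canonically through the glued image of some $(o, k) \in O(i) \t K$. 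Combined with the connectedness of $\int O^\p$, this structure yields the dichotomy: a natural transformation $\al: O^\p \to X_{a+1}$ is either of type (a) with every $\al_i(x) \in \ob X_a(i)$, or of type (b) with every $\al_i(x) \in O(i) \t (\ob L \setminus \ob K)$ sharing a common fixed second coordinate $l$, the first coordinates defining an element of $S$. A mixed type is impossible because naturality along any zigzag in $\int O^\p$ would force a forbidden ``new to old'' morphism. These two types give precisely the objects of $\ihom(O^\p, X_a)$ and of $S \t (\ob L \setminus \ob K)$, matching the set of objects of the claimed pushout.

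The main obstacle is the morphism-level bookkeeping. Modifications between same-type natural transformations directly produce modifications in $\ihom(O^\p, X_a)$ or in $S \t L = \ihom(O^\p, O\t L)$; the ``new to old'' case is vacuous by the sieve argument. The crux is the ``old to new'' case: a modification $\al \to \be$ with $\al$ of type (a) and $\be$ of type (b) must be identified with a morphism in the pushout $\ihom(O^\p, X_a) \cup_{S \t K} (S \t L)$. Using the canonical factorization of each pointwise morphism $\al_i(x) \to \be_i(x)$ through the glued image of some $(o, k) \in O(i) \t K$, together with a connectedness argument to promote the intermediate data $(k_{i, x})$ to a natural transformation $\de: O^\p \to O \t K$, one exhibits the modification as the zigzag $\al \to i \circ \de \sim j \circ \de \to \be$, where $i$ and $j$ are the two maps out of $O \t K$. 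Checking that the equivalence classes of such decompositions coincide with the zigzag relation in the pushout is the delicate step I expect to take the most care.
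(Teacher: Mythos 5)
Your outline parallels the paper's own proof quite closely: the identification $\ihom(O^\p, O\t K)\isom \ihom(O^\p,O)\t K$ via connectedness of the Grothendieck construction (equivalently $\colim_I O^\p=*$) is exactly Lemma \ref{HomChange}, and your object-level dichotomy for natural transformations $O^\p\to X_{a+1}$ is the paper's argument essentially verbatim. The genuine gap is at the morphism level, which you explicitly defer as ``the delicate step'': as written, nothing in the proposal actually identifies the hom-sets of the pushout $\ihom(O^\p,X_a)\cup_{S\t K}(S\t L)$, where $S=\ihom(O^\p,O)$. You propose to match ``equivalence classes of such decompositions'' against ``the zigzag relation in the pushout'', but pushouts in $\Cat$ are not computed by a tame zigzag relation in general, so this is precisely the point where an argument is needed rather than promised. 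A second, related imprecision: you attribute the explicit description of the pointwise pushout $X_{a+1}(i)$ to ``the sieve property of $K\to L$''. The sieve condition does give the absence of new-to-old morphisms, but the claim that every old-to-new morphism factors canonically through a glued object uses the cosieve-plus-right-adjoint (retraction $r$) part of the Dwyer structure of $K\inj L\in\cI_\Cat$; this is \cite{BMOOPY13}, Lemma 2.5 (Lemma \ref{DwyerPushout} in the paper), not a consequence of the sieve condition alone.

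Both issues are repaired by one observation the paper makes and you omit: since $S$ is a set, the map $S\t K\to S\t L$ is again a Dwyer map between posets (Lemmas \ref{DwyerProduct} and \ref{HomDwyer}), so the hom-level pushout $D=\ihom(O^\p,X_a)\cup_{S\t K}(S\t L)$ itself admits the explicit description of Lemma \ref{DwyerPushout}. With that formula available on both sides --- for $D$ and, pointwise, for $X_{a+1}$ --- and with your object dichotomy in hand, the comparison of morphism sets reduces to a direct componentwise check: a morphism from an old-type $\al$ to a new-type $\be$ is, in each component, an element of $X_a(i)\bigl(\al_i(x),F_i(o_{i,x},r(l))\bigr)$, and the same formula computes $D(\al,\be)$, so your intermediate transformation $\de$ and its constant second coordinate are subsumed by the identity $D(d,d')=C(d,F(r(d')))$. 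No generators-and-relations analysis of the pushout is needed, and without this (or an equivalent substitute) the final step of your argument remains open.
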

  
  The proof is divided into several steps.
  
  \begin{lemma}\label{DwyerProduct}
  Let $K\to L$ be a Dwyer map between posets and $O\in \Cat^I$ be a diagram. Then 
  the natural map $O\t K\to O\t L$ is pointwise a Dwyer map between posets. 
  \end{lemma}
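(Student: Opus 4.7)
The plan is to reduce the question to a pointwise statement in $\Cat$ and then to the claim that a set-indexed coproduct of a Dwyer map between posets is again a Dwyer map between posets. Although the statement reads ``$O \in \Cat^I$'', both the application in Proposition \ref{VQ1} and the conclusion (``between posets'') force $O$ to be a diagram of sets, i.e.\ $O \in \Set^I$; I shall use this. Fix $i \in I$: since $O(i)$ is then a discrete category, the $\Cat$-product $O(i) \t K$ coincides with the coproduct $\bigsqcup_{x \in O(i)} K$ in $\Cat$, and likewise for $L$. Under this identification the map $(O \t K)(i) \to (O \t L)(i)$ becomes the $O(i)$-fold coproduct of copies of $K \to L$.

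So the problem reduces to the following: for any set $S$ and any Dwyer map $K \to L$ between posets, $\bigsqcup_S K \to \bigsqcup_S L$ is a Dwyer map between posets. I would verify the three ingredients separately. A coproduct of posets is a poset, and a coproduct of monomorphisms is a monomorphism. For sieves and cosieves, the key observation is that in a coproduct of categories no morphism crosses components, so any morphism $b \to i(a)$ in $\bigsqcup_S L$ with $a \in \bigsqcup_S K$ lives in the copy indexed by the same $s \in S$ that contains $a$, and the sieve condition for $K \to L$ supplies the required lift; the cosieve case is dual. Finally, given a Dwyer factorization $K \xrw{f} W \xrw{j} L$ with right adjoint $r : W \to K$, taking $S$-fold coproducts yields a factorization $\bigsqcup_S K \to \bigsqcup_S W \to \bigsqcup_S L$; the middle map is still a cosieve by the previous remark, and $\bigsqcup_S r$ is right adjoint to $\bigsqcup_S f$ because hom sets in a $\Cat$-coproduct are disjoint unions of hom sets and the adjunction identities can be checked componentwise.

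I do not expect any real obstacle; every condition defining a Dwyer map is componentwise in nature, and the coproduct decomposition does not mix components. The only point that warrants care is the identification $O(i) \t K \isom \bigsqcup_{x \in O(i)} K$ in $\Cat$, which genuinely uses that $O(i)$ is discrete and would fail if $O$ were allowed to take values in more general categories.
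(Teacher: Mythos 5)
Your proof is correct and takes essentially the same route as the paper: the paper also works pointwise with a set $A=O(i)$ and checks that $A\t K\xrw{A\t f}A\t W\xrw{A\t j}A\t L$ is a Dwyer factorization with $A\t r$ right adjoint to $A\t f$, via exactly the hom-set decomposition (hom is $W(fx,y)$ when the $A$-components agree and empty otherwise) that underlies your identification $A\t K\isom\bigsqcup_{A}K$. Your observation that $O$ must really be a diagram of sets matches the paper, whose proof likewise only treats the case of a set $A$ despite the statement saying $O\in\Cat^I$.
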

  \begin{proof}
  Assume $i:K\to L$ factors as $K\xrw{f}W \xrw{j} L$, where $f$ is a monomorphism, $j$ is a cosieve, and $f$ admits a right adjoint $r$.
  We prove that $A\t i:A\t K\inj A\t L$ is a Dwyer map between posets for any set $A$. 
  It is clear that $A\t i$ is a sieve. 
  Consider the sequence $A\t K\xrw{A\t f} A\t W\xrw{A\t j} A\t L$. Clearly $A\t j$ is a cosieve.
  It remains to prove that $A\t r:A\t W\to A\t K$ is the right adjoint of $A\t f$. 
  For any $(a,x)\in A\t K$ and $(b,y)\in A\t W$, we have 
  \begin{align*}
  &(A\t W)((A\t f)(a,x),(b,y)) \\
  &= (A\t W)((a,fx),(b,y)) \\
  &= \left \{\begin{array}{ll}
     W(fx,y) &\ \text{if}\ a=b, \\
     \es &\ \text{otherwise}
     \end{array} \right. \\
  &= \left \{\begin{array}{ll}
     K(x,ry) &\ \text{if}\ a=b, \\
     \es &\ \text{otherwise}
     \end{array} \right. \\
  &= (A\t K)((a,x),(b,ry)) \\
  &= (A\t K)((a,x),(A\t r)(b,y)).
  \end{align*}
  \end{proof}
  
  \begin{lemma}\label{HomChange}
  Let $O\in \cO$, and $D$ be a diagram in $\Cat^I$. Let $K$ be a poset.
  Then $\ihom(O,D\t K)=\ihom(O,D)\t K$. 
  \end{lemma}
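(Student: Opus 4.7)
The plan is to apply the Yoneda lemma in $\Cat$: to establish the isomorphism, it suffices to show that $\hom_\Cat(A,-)$ applied to both sides produces naturally isomorphic sets for every $A\in \Cat$. Interpret $D\t K$ as the pointwise product $D\t cK$ of $D$ with the constant diagram $cK$ at $K$, and similarly on the right-hand side.

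First I would use the tensor-hom adjunction for the $\Cat$-enriched structure on $\Cat^I$ to rewrite
\[ \hom_\Cat(A,\ihom(O,D\t cK))\isom \hom_{\Cat^I}(cA\t O,D\t cK). \]
Since products in $\Cat^I$ are computed pointwise, the right-hand side splits as
\[ \hom_{\Cat^I}(cA\t O,D)\t \hom_{\Cat^I}(cA\t O,cK). \]
The first factor becomes $\hom_\Cat(A,\ihom(O,D))$ by the same adjunction. For the second, natural transformations into a constant diagram $cK$ correspond to functors $\colim_I(cA\t O)\to K$ in $\Cat$. Because $\Cat$ is Cartesian closed, the functor $A\t -:\Cat\to \Cat$ is a left adjoint and preserves colimits, so together with the hypothesis $\colim_I O=\ast$ this yields
\[ \colim_I(cA\t O)=\colim_i(A\t O_i)=A\t \colim_I O=A, \]
and the second factor is $\hom_\Cat(A,K)$.

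Combining the pieces gives a natural isomorphism
\[ \hom_\Cat(A,\ihom(O,D\t K))\isom \hom_\Cat(A,\ihom(O,D))\t \hom_\Cat(A,K)\isom \hom_\Cat(A,\ihom(O,D)\t K), \]
and Yoneda concludes. The argument is just a chain of standard adjunction and colimit manipulations, so I do not anticipate a serious obstacle; the point most worth verifying carefully is that $A\t -$ commutes with the colimit over $I$, but this is immediate from the left adjointness of $A\t -$ in the Cartesian closed category $\Cat$, together with the fact that the inclusion $\Set\hra \Cat$ preserves colimits so that the hypothesis $\colim_I O=\ast$ is interpretable in $\Cat$ as well.
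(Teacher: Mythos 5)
Your proof is correct, but it follows a genuinely different route from the paper. The paper argues at the level of elements: it writes down the canonical monomorphism $\ihom(O,D)\t K\to \ihom(O,D\t K)$, $(f_0,k)\mto (x\mto (f_0(x),k))$, and shows it is surjective by observing that naturality of $f\in\ihom(O,D\t K)$ forces the $K$-component of $f(x)$ to be constant along every morphism $x\sim g(x)$, and that $\colim_I O=*$ in $\Set$ means the equivalence relation generated by these identifications has a single class, so the $K$-component is globally constant. You instead argue by representability: $\hom_\Cat(A,\ihom(O,D\t K))\isom\hom_{\Cat^I}(cA\t O, D\t cK)$, split off the $cK$ factor using the product, and identify $\hom_{\Cat^I}(cA\t O,cK)\isom\hom_\Cat(\colim_I(cA\t O),K)\isom\hom_\Cat(A,K)$ via $\colim_I\dashv c$, cartesian closedness of $\Cat$ (so $A\t-$ preserves colimits), and the fact that the discrete inclusion $\Set\to\Cat$ preserves colimits, which is exactly where $\colim_I O=*$ enters. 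Your formal argument buys a few things: it treats objects and morphisms of the hom-categories simultaneously (the paper's proof only explicitly checks the object level), it makes the role of the hypothesis $\colim_I O=*$ transparent through the adjunction, and it does not use that $K$ is a poset (neither, in fact, does the paper's). The paper's argument is more elementary and pins down the specific canonical map as the isomorphism; with your approach one should note, as is clear since every step is natural in $D$ and $K$, that the Yoneda isomorphism is compatible with maps $K\to L$ — this naturality is what is actually used later (Lemma \ref{HomDwyer}, Proposition \ref{VQ1}) to identify $\ihom(O^\p,O\t K)\to\ihom(O^\p,O\t L)$ with $\ihom(O^\p,O)\t K\to\ihom(O^\p,O)\t L$.
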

  \begin{proof}
  We have a natural monomorphism $\ihom(O,D)\t K\to \ihom(O,D\t K)$ where a pair $(f_0,k)\in \ihom(O,D)\t K$ is sent to the morphism $x\mto (f_0(x),k)$.
  Let $f\in \ihom(O,D\t K)$. We prove that for all $i\in I$ and $a\in O(i)$, the second components of $f(a)\in D(i)\t K$ are the same. If this holds, then $f$ is in the image of the natural monomorphism and the lemma follows. 
  
  Consider a morphism $g:i\to j$ in $I$. For all $x\in O(i)$, we have $f(x)\in D(i)\t K$ and the second components of $f(g(x))\in D(j)\t K$ are the same because the maps $D(i)\t K\to D(j)\t K$ in the diagram $D\t K$ preserve the second component. 
  
  By assumption, $\colim_I O=*$. 
  So any two elements in $\bscup_{i\in I} O(i)$ are equivalent the under the equivalence relation generated by $x\sim g(x)$ for every object $i,j\in I$, morphism $g:i\to j$, and $x\in O(i)$. Therefore the images of them under the map $f$ have the same second components. 
  \end{proof}
  
  \begin{lemma}\label{HomDwyer}
  Let $K$, $L$, $O$ be as in Lemma \ref{DwyerProduct} and $O^\p\in \cO$. 
  Then the natural map $\ihom(O^\p, O\t K) \to \ihom(O^\p, O\t L)$ is a Dwyer map between posets. 
  \end{lemma}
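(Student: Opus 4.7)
The plan is to reduce the claim to Lemma \ref{DwyerProduct} by first applying Lemma \ref{HomChange}. Every map in $\cI_\Cat$ is a Dwyer map between posets, so both $K$ and $L$ are posets, and Lemma \ref{HomChange} (with $O^\p$ playing the role of the orbit and $O$ the role of the diagram) yields natural identifications
\[
\ihom(O^\p, O\t K) = \ihom(O^\p, O)\t K, \qquad \ihom(O^\p, O\t L) = \ihom(O^\p, O)\t L,
\]
under which the map $\ihom(O^\p, O\t K)\to \ihom(O^\p, O\t L)$ of interest becomes $\ihom(O^\p, O)\t i$, where $i\colon K\to L$ is the given Dwyer map.

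The key observation is then that $A:=\ihom(O^\p, O)$ is a discrete category, i.e.\ a set. Indeed, $O^\p$ and $O$ both lie in $\Set^I\sse \Cat^I$, so each $O(i)$ is a discrete category; the internal hom $\ihom(O^\p(i), O(i))$ in $\Cat$ is then the discrete category on the set of functions $O^\p(i)\to O(i)$; and $\ihom(O^\p, O)$, being computed as the end $\int_{i\in I}\ihom(O^\p(i), O(i))$ (this is the $\Cat^I$-induced internal hom), is a limit of discrete categories, hence discrete.

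With $A$ a set, the last step is to invoke the argument in the proof of Lemma \ref{DwyerProduct}: that proof in fact establishes that for any set $A$ and any Dwyer map $K\to L$ between posets, the induced map $A\t K\to A\t L$ is a Dwyer map between posets (the sieve, cosieve, and adjointness conditions are all checked componentwise in the $A$-direction). Applying this with $A=\ihom(O^\p, O)$ yields the desired conclusion.

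The only non-routine point is the discreteness of $\ihom(O^\p, O)$, which requires unpacking the definition of the $\Cat^I$-induced internal hom and using that $\Set$ sits inside $\Cat$ as a full reflective subcategory closed under limits. After that the proof is purely formal and simply combines Lemma \ref{HomChange} with the computation already carried out inside the proof of Lemma \ref{DwyerProduct}.
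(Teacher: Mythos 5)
Your proof is correct and takes essentially the same route as the paper: reduce via Lemma \ref{HomChange} and then apply the product argument of Lemma \ref{DwyerProduct}. Your explicit observation that $\ihom(O^\p,O)$ is a discrete category (so that the ``$A$ a set'' case proved inside Lemma \ref{DwyerProduct} applies verbatim) is a point the paper leaves implicit, but it is the same argument.
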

  \begin{proof}
  By Lemma \ref{HomChange}, we only need to prove that $\ihom(O^\p,O)\t K\to \ihom(O^\p,O)\t L)$ is a Dwyer map between posets. This follows from Lemma \ref{DwyerProduct}.
  \end{proof}
  
  Before proving Proposition \ref{VQ1}, we need one lemma from \cite{BMOOPY13}, which gives an explicit description of pushouts in $\Cat$ along Dwyer maps between posets. 
  \begin{lemma}[\cite{BMOOPY13}, Lemma 2.5]\label{DwyerPushout}
  Let $i:A\to B$ be a Dwyer map between posets with cosieve $W$ and retraction $r$, and $F:A\to C$ be a functor. Let $D$ be the pushout of $i$ and $F$. Then $\ob(D)=\ob(C)\sqcup (\ob(B)\bl \ob(A))$ and for $d,d^\p\in \ob(D)$, 
  \[ D(d,d^\p)=\left \{ 
  \begin{array} {ll}
  B(d,d^\p) & \text{if } d,d^\p\in \ob(B)\bl \ob(A), \\
  C(d,d^\p) & \text{if } d,d^\p\in \ob(C), \\
  \es & \text{if } d\in \ob(B)\bl \ob(A) \text{ and } d^\p\in \ob(C), \\
  C(d,F(r(d^\p))) & \text{if } d\in \ob(C) \text{ and } d^\p\in \ob(B)\bl \ob(A).
  \end{array}\right.
  \]
  \end{lemma}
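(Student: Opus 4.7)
My plan is to define a candidate category $D'$ with the stated object set and hom-sets (understanding $C(d,F(r(d^\p)))$ to mean $\es$ when $d^\p\notin \ob(W)$), construct a cocone from $(C,B)$ to $D'$ under the span, and verify the universal property. The object set is immediate: since $i$ is a monomorphism in $\Cat$ and $\ob\colon \Cat\to \Set$ is a left adjoint (the indiscrete category functor is its right adjoint), $\ob(D)$ is the $\Set$-pushout $\ob(C)\sqcup (\ob(B)\bl \ob(A))$.

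The heart of the argument is defining composition in $D'$. The only subtle case is composing $\al\in D'(c,b_1)$ with $\be\in D'(b_1,b_2)$ for $c\in \ob(C)$ and $b_1,b_2\in \ob(B)\bl \ob(A)$: nonemptiness of $D'(c,b_1)$ forces $b_1\in \ob(W)$, the cosieve property of $j$ forces $b_2\in \ob(W)$ and places the morphism $\be$ inside $W$, and I would set $\be\circ\al:=F(r(\be))\circ \al$ in $C$. The key algebraic identification $B(a,b)=W(a,b)=A(a,r(b))$ for $a\in \ob(A)$ and $b\in \ob(W)$, which follows from the cosieve property of $j$ (first equality) and from the adjunction $f\dashv r$ (second equality), is what makes the pushout collapse so cleanly. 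Associativity and identities in $D'$ then reduce to those in $B$ and $C$ together with functoriality of $F\circ r$.

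The cocone maps are $\iota_C\colon C\hra D'$ (the obvious inclusion) and $\iota_B\colon B\to D'$ sending $a\in \ob(A)$ to $F(a)$, sending $b\in \ob(B)\bl \ob(A)$ to itself, acting by $F$ on morphisms of $A$, acting as the identity on morphisms within $\ob(B)\bl \ob(A)$, and sending a morphism $a\to b$ with $a\in \ob(A)$ and $b\in \ob(W)\bl \ob(A)$ to $F$ applied to its transpose $a\to r(b)$ under $f\dashv r$. Commutativity of the pushout square is exactly the identification above. For the universal property, given any cocone $(\phi_C,\phi_B)\colon (C,B)\to E$ with $\phi_C F=\phi_B i$, I would define $\psi\colon D'\to E$ to agree with $\phi_C$ on the $C$-part and with $\phi_B$ on morphisms within $\ob(B)\bl \ob(A)$, and on a morphism $\al\in D'(c,b)=C(c,F(r(b)))$ with $b\in \ob(W)\bl \ob(A)$ to be $\psi(\al)=\phi_B(\ep_b)\circ \phi_C(\al)$, where $\ep_b\colon r(b)\to b$ is the counit of $f\dashv r$ at $b$; this formula is forced by uniqueness because $\al$ canonically factors as $c\to r(b)\to b$ through $\iota_C$ and $\iota_B$. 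Functoriality of $\psi$ is a case check mirroring the one for composition in $D'$.

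I expect the main obstacle to be bookkeeping over all configurations of source and target among the three subsets $\ob(C)$, $\ob(W)\bl \ob(A)$, and $\ob(B)\bl \ob(W)$, and verifying for each configuration that composition in $D'$ and the definition of $\psi$ behave naturally. Rewriting everything in terms of $f$, $r$, and the counit $\ep$ should keep the case analysis uniform, and because $B$ is a poset all equalities of parallel $B$-morphisms are automatic once source and target agree, so no genuinely categorical coherence issues arise.
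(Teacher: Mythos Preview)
The paper does not give its own proof of this lemma; it is quoted from \cite{BMOOPY13}, Lemma~2.5, and used as a black box. Your proposal---building the candidate category $D'$ explicitly, constructing the cocone via the counit $\ep$ of $f\dashv r$, and verifying the universal property by a case analysis over the three regions $\ob(C)$, $\ob(W)\bl\ob(A)$, and $\ob(B)\bl\ob(W)$---is correct and is the standard direct argument. Your clarification that $C(d,F(r(d^\p)))$ must be read as $\es$ when $d^\p\notin\ob(W)$ is a genuine improvement on the statement as written (since $r$ is only defined on $W$), and your observation that $B$ being a poset makes all parallel-morphism equalities on the $B$-side automatic is exactly why the bookkeeping stays finite.
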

  
  \begin{proof}[Proof of Proposition \ref{VQ1}]
  Let $D$ be the following pushout. 
  \[
\xymatrix{
\ihom(O^\p, O \t K) \ar[r] \ar[d]  &  \ihom(O^\p, X_{a}) \ar[d]\\
\ihom(O^\p, O \t L) \ar[r] & D\\
}
\]
  We prove that $D=\ihom(O^\p,X_{a+1})$. 
  By Lemma \ref{DwyerPushout}, \[\ob(D)= \ob(\ihom(O^\p,X_a)) \sqcup (\ob(\ihom(O^\p,O\t L)) \bl \ob(\ihom(O^\p,O\t K))).\]
  By Lemma \ref{HomChange}, we have $\ihom(O^\p,O\t L)=\ihom(O^\p,O)\t L$ and $\ihom(O^\p,O\t K)=\ihom(O^\p,O)\t K$. So 
  \[\ob(D)=\ob(\ihom(O^\p,X_a)) \sqcup \ob (\ihom(O^\p,O)\t (L\bl K)).\]
  
  Now let us consider $X_{a+1}$. By Lemma \ref{DwyerPushout}, for each $i$, \[\ob(X_{a+1}(i))=\ob(X_a(i))\sqcup \ob (O(i)\t (L\bl K)).\]
  
  Clearly there is a monomoprhism $D\to \ihom(O^\p, X_{a+1})$. We prove that this is an isomorphism. 
  
  Let $f\in \ihom(O^\p,X_{a+1})$.
  For any objects $i,j\in I$, morphism $g:i\to j$, and $x\in O^\p(i)$, we have $f(x)\in X_a(i)$ if and only if $f(g(x))\in X_a(j)$. 
  By assumption, $\colim_I\cO^\p=*$. So either $f(x)\in X_a(i)$ for all $i\in I$ and $x\in O^\p(i)$, or $f(g(x))\in O(i)\t (L\bl K)$ for all such $i$ and $x$. 
  So \[\ob(\ihom(O^\p,X_{a+1}))=\ob(\ihom(O^\p,X_a)) \sqcup \ob (\ihom(O^\p,O)\t (L\bl K))=\ob(D).\]
  
  So the natural map $D\to \ihom(O^\p,X_{a+1})$ is an isomorphism on objects. It is easy to see that the map is also an isomorphism on morphisms. 
  \end{proof}
  
  \begin{prop}[Analogue of Q2 and Q3]\label{VQ2}
  Let $O\in \cO$ and $X_1\to \cdots \to X_a\to X_{a+1}\to \cdots$ be a continuous transfinite sequence in $\Cat^I$ where each map $X_a\to X_{a+1}$ is as in Proposition \ref{VQ1}.
  The natural map
  \[ \colim_{a} \ihom(O,X_a)\to \ihom(O,\colim_a X_a)\]
  is an isomorphism.
  \end{prop}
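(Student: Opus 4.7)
The plan is to prove the stronger isomorphism statement by assigning to each object and morphism of $(\colim_a X_a)(i)$ a ``birth time'' --- the least index $a$ at which it appears in $X_a(i)$ --- and showing this invariant is constant on the image of any natural transformation $O \to \colim_a X_a$, using the hypothesis $\colim_I O = *$.

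First I would reduce the claim to surjectivity. By Lemma \ref{DwyerProduct} the attaching maps $O_a \t K_a \inj O_a \t L_a$ at each stage are pointwise Dwyer maps between posets, and Lemma \ref{DwyerPushout} shows pushouts along such maps are monomorphisms (on objects and on each hom-set). So each $X_a \to X_{a+1}$, and by continuity each $X_a \to X_b$ for $a \le b$, is a pointwise monomorphism in $\Cat$. Since $\ihom(O,-)$ preserves such monomorphisms, the natural map $\colim_a \ihom(O, X_a) \to \ihom(O, \colim_a X_a)$ is itself a monomorphism; it remains only to prove surjectivity on objects and on morphisms.

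Next I would establish the invariance of birth time under $I$-transitions. By Lemma \ref{DwyerPushout} applied pointwise at each $i \in I$, we have $\ob(X_{a+1}(i)) = \ob(X_a(i)) \sqcup \ob(O_a(i) \t (L_a \bl K_a))$, with an analogous ``old/new'' decomposition on the morphism sets (where ``mixed'' morphisms from old to new are also classified as new). The key claim is that for any $g : i \to j$ in $I$, the functor $X_{a+1}(g) : X_{a+1}(i) \to X_{a+1}(j)$ carries ``old'' to ``old'' and ``new'' to ``new'': on the new part it acts as $O_a(g) \t \id_{L_a \bl K_a}$, directly from the pointwise pushout formula. Consequently, for any object or morphism $y$ of $(\colim_a X_a)(i)$, the birth time $b(y) := \min\{ a : y \in X_a(i)\}$ satisfies $b(X(g)(y)) = b(y)$ for every $g : i \to j$.

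Finally, given a natural transformation $f : O \to \colim_a X_a$, each $x \in O(i)$ determines an index $b(i, x) := b(f_i(x))$, and naturality together with the previous paragraph gives $b(i, x) = b(j, O(g)(x))$ for every $g : i \to j$. Since $\colim_I O = *$, the equivalence relation on $\bscup_{i\in I} O(i)$ generated by $(i, x) \sim (j, O(g)(x))$ is the total relation, forcing $b(i, x)$ to equal some constant $a$; so $f$ factors through $X_a$. The same argument applied to a morphism $\al : f \to f'$ of $\ihom(O, \colim_a X_a)$ (whose data is a compatible family $\al_{i, x} : f_i(x) \to f'_i(x)$) shows $\al$ lifts to some $X_b$, and taking the maximum of the indices for $f$, $f'$, and $\al$ delivers surjectivity on morphisms. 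The main obstacle is the naturality in $I$ of the pushout decomposition from Lemma \ref{DwyerPushout} --- once this is nailed down, the connectivity hypothesis $\colim_I O = *$ spreads the birth-time information across all of $O$ exactly as in the final paragraph of the proof of Proposition \ref{VQ1}.
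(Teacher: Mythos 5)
Your proposal is correct and follows essentially the same route as the paper: use the explicit pushout description from Lemma \ref{DwyerPushout} to see that each stage inclusion is a monomorphism and that the $I$-transition functors preserve and reflect the ``old/new'' decomposition (so the birth stage of $f_i(x)$ is invariant), then invoke $\colim_I O = *$ to make that stage constant and factor $f$ through a single $X_b$. The only difference is that you spell out the injectivity of the comparison map and the lifting of morphisms of $\ihom(O,\colim_a X_a)$, which the paper treats only implicitly (``it is easy to see'' / the object-level argument), so this is a welcome elaboration rather than a new method.
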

  \begin{proof}
  By Lemma \ref{DwyerPushout}, each $X_a\to X_{a+1}$ is a monomorphism. So $\colim_a X_a$ can be understood as an infinite union of $X_a$, and $\colim_a \ihom(O, X_a)$ can be understood as an infinite union of $\ihom(O, X_a)$.
  
  Let $f\in \ihom(O, \colim_a X_a)$. We prove that there exists some index $b$ such that $f$ factors as $O\to X_b\to \colim_a X_a$. If this holds then $f\in \colim_a \ihom(O,X_a)$ and the proposition follows.
  
  Consider any $i\in I$, an index $b$ and an object $x\in X_b(i)\sse \colim_a X_a(i)$.
  By the explicit description of $X_a\to X_{a+1}$, we see that
  \begin{enumerate}
  \item If $g:i\to j$ is a map in $I$, then $f(g(x))\in X_b(j)$.
  \item If $g:j\to i$ is a map in $I$ and $y\in \colim_a X_a(j)$ is an object such that $g(y)=x$, then $f(y)\in X_b(j)$.
  \end{enumerate}
  
  Therefore for any objects $i,j\in I$, morphism $g:i\to j$ and $x\in O(i)$, we have that $f(x)\in X_b(i)$ if and only if $f(g(x))\in X_b(j)$. 
  By assumption, $\colim_I O=*$. So there exists some index $b$ such that for all $i\in I$ and $x\in O(i)$, we have $f(x)\in X_b(i)$. 
  \end{proof}
  
  Now we transfer our analogous orbit axioms (Proposition \ref{VQ1} and \ref{VQ2}) to the actual orbit axioms. 
  \begin{prop}[Q1]\label{UQ1}
  In the setting of Theorem \ref{MainThmCat}, the class $\cO\sse \Cat^I$ satisfies Q1. 
  \end{prop}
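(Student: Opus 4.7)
The plan is to deduce Q1 from the $\ihom$-version already proved as Proposition \ref{VQ1}, and then transport the resulting strict pushout in $\Cat$ into a homotopy pushout in $\sSet$ by applying $\Ex^2 N$.

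First, I unpack the $\sSet$-enrichment on $\Cat^I$. By construction of the copower, $O \ot K = O\t c\Sd^2 K$ levelwise, and for $K\to L\in \cI_{\sSet}$ the resulting map $c\Sd^2 K\to c\Sd^2 L$ lies in $\cI_\Cat$, which was already noted to consist of Dwyer maps between posets. Hence the Q1 pushout square fits the hypotheses of Proposition \ref{VQ1} with $c\Sd^2 K$ and $c\Sd^2 L$ in the role of $K$ and $L$, which yields that
\[
\xymatrix{
\ihom(O^\p, O \t c\Sd^2 K) \ar[r] \ar[d]  &  \ihom(O^\p, X_{a}) \ar[d]\\
\ihom(O^\p, O \t c\Sd^2 L) \ar[r] &  \ihom(O^\p, X_{a+1})
}
\]
is a pushout in $\Cat$. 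Lemma \ref{HomDwyer} further identifies the left vertical map as a Dwyer map between posets.

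Next, I apply $\Ex^2 N$ to this square, which by definition of $\hom = \Ex^2 N \ihom$ produces the desired diagram in $\sSet$. What remains is to verify that this $\sSet$-square is a homotopy pushout. My plan is to invoke the Thomason model structure on $\Cat$: Dwyer maps are cofibrations there, and the model structure is left proper, so the $\Cat$-pushout is already a homotopy pushout in Thomason; then, since $(c\Sd^2, \Ex^2 N)$ is a Quillen equivalence between the standard $\sSet$ and Thomason $\Cat$, $\Ex^2 N$ sends a Thomason homotopy pushout to an $\sSet$ homotopy pushout.

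The main obstacle is this last transport step. Unlike Proposition \ref{VQ1} and Lemma \ref{HomDwyer}, which are purely categorical, passing to $\sSet$ requires genuine model-categorical input. If citing left properness of Thomason directly is inconvenient, an alternative route is Thomason's explicit observation that $N$ preserves pushouts along Dwyer maps up to weak equivalence in $\sSet$, combined with the facts that $\Ex$ preserves weak equivalences and monomorphisms. The $\Ex^2 N$-image is then (up to weak equivalence) a pushout along a monomorphism in the left proper category $\sSet$, which is automatically a homotopy pushout.
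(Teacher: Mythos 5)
Your overall strategy---reduce to Proposition \ref{VQ1} via the identification $O\ot K=O\t c\Sd^2K$, then transport the resulting strict $\Cat$-pushout to a homotopy pushout in $\sSet$---is the paper's strategy, and your ``alternative route'' is in fact the paper's own argument: the pushout of nerves along the monomorphism $N\ihom(O^\p,O\ot K)\to N\ihom(O^\p,O\ot L)$ is a homotopy pushout in $\sSet$, Thomason's Proposition 4.3 identifies its corner with $N\ihom(O^\p,X_{a+1})$ up to weak equivalence, and the objectwise weak equivalence $N\to\Ex^2N$ coming from $\Id\to\Ex$ transports the conclusion to the $\hom$-square.

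Your primary route, however, rests on a false claim: Dwyer maps are \emph{not} cofibrations in the Thomason model structure; the implication goes the other way (Thomason's Proposition 4.2: cofibrations are (pseudo-)Dwyer maps). For example, $\es\to C$ is a Dwyer map for every small category $C$, while Thomason-cofibrant objects are posets, and not even every poset is cofibrant, so ``Dwyer map between posets'' does not imply ``cofibration.'' In the case at hand the left vertical map is $\ihom(O^\p,O)\t c\Sd^2K\to \ihom(O^\p,O)\t c\Sd^2L$ with $\ihom(O^\p,O)$ a discrete category (both diagrams lie in $\Set^I$), so it is a coproduct of copies of a map in $\cI_\Cat$ and hence genuinely is a cofibration---but that observation is what would be needed, and it is absent from your write-up. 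Separately, the step ``$\Ex^2N$ sends Thomason homotopy pushouts to $\sSet$ homotopy pushouts because it is part of a Quillen equivalence'' is not a formal property of right Quillen functors (they preserve homotopy pullbacks, not pushouts); it is salvageable here only because $\Ex^2N$ preserves \emph{all} Thomason weak equivalences (they are defined through it), so it computes its own derived functor, which is an equivalence of homotopy theories. Your fallback route avoids both problems and is the argument to keep.
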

  \begin{proof}
  Let $O,O^\p\in \cO$ and $K\inj L\in \cI_\sSet$. 
  Let $K^\p\inj L^\p=c\Sd^2 (K\inj L) \in \cI_\Cat$. 
  By definition, $O\ot K=O\t K^\p$ and $O\ot L=O\t L^\p$. 
    Let \[
\xymatrix{
O \ot K \ar[r] \ar[d]  &  X_{a} \ar[d]\\
O \ot L \ar[r] &  X_{a+1}\\
}
\]
be a pushout in $\Cat^I$. 
By Proposition \ref{VQ1}, \[
\xymatrix{
\ihom(O^\p, O \ot K) \ar[r] \ar[d]  &  \ihom(O^\p, X_{a}) \ar[d]\\
\ihom(O^\p, O \ot L) \ar[r] &  \ihom(O^\p, X_{a+1})\\
}
\]
is a pushout in $\Cat$. 

  In fact, by the proof of Proposition \ref{VQ1}, this is a pushout along a Dwyer map between posets. By \cite{Tho80}, Proposition 4.3, the natural map 
  $$D=N\ihom(O^\p, X_{a})\cup_{N\ihom(O^\p, O \ot K)} N\ihom(O^\p, O \ot L)\to N\ihom(O^\p, X_{a+1})$$
  is a weak equivalence. 
  
  There is a natural transformation $\eta:\Id\to \Ex$ which is objectwise a weak equivalence (\cite{GJ09}, Theorem III.4.6). 
  So we have a commutative cube 
  $$\begin{tikzpicture}
  \matrix (m) [matrix of math nodes, row sep=1em,
    column sep=0em]{
    & N\ihom(O^\p, O \ot K) & & N\ihom(O^\p, X_{a}) \\
    \hom(O^\p, O \ot K) & & \hom(O^\p, X_{a}) & \\
    & N\ihom(O^\p, O \ot L) & & D \\
    \hom(O^\p, O \ot L) & & \hom(O^\p, X_{a+1}) & \\};
  \path[-stealth]
    (m-1-2) edge (m-1-4) edge (m-2-1)
            edge (m-3-2)
    (m-1-4) edge (m-3-4) edge (m-2-3)
    (m-2-1) edge (m-2-3)
            edge (m-2-3) edge (m-4-1)
    (m-3-2) edge (m-3-4)
            edge (m-4-1)
    (m-4-1) edge (m-4-3)
    (m-3-4) edge (m-4-3)
    (m-2-3) edge (m-4-3)
            edge (m-4-3);
\end{tikzpicture}$$
  where the back square is a homotopy pushout (because it is a pushout and the map $N\ihom(O^\p,O\ot K)\to N\ihom(O^\p,O\ot L)$ is a cofibration), and the four arrows from the back square to the front square are weak equivalences. 
  Hence the front square is a homotopy pushout. 
  \end{proof}
  
  \begin{prop}[Q2 and Q3]\label{UQ2}
  In the setting of Theorem \ref{MainThmCat}, the class $\cO\sse \Cat^I$ satisfies Q3, thus also satisfies Q2. 
  \end{prop}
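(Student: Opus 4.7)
The plan is to deduce both Q3 and Q2 from the already-established Proposition \ref{VQ2} by exploiting the fact that $\hom = \Ex^2 N \ihom$ together with the observation that the strict monoidal functor $\Ex^2 N\colon \Cat\to \sSet$ preserves filtered colimits. With this in hand, the natural comparison map for a continuous transfinite sequence $(X_a)$ factors as
$$\colim_a \hom(O,X_a)\xrw{\vp} \Ex^2 N\colim_a \ihom(O,X_a)\xrw{\isom} \Ex^2 N\ihom(O,\colim_a X_a)=\hom(O,\colim_a X_a),$$
where the second arrow is the $\Ex^2 N$-image of the isomorphism from Proposition \ref{VQ2}, and $\vp$ is the canonical map induced by the universal property of the colimit. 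It then suffices to exhibit a limit ordinal $c$ for which $\vp$ is an isomorphism whenever the sequence is indexed by ordinals $<c$.

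The key step is to verify that both $N$ and $\Ex$ preserve filtered colimits. For $N$, one uses $N(C)_n=\Cat([n],C)$ together with the fact that $[n]$ is a finite category, hence a compact object of $\Cat$, so $\Cat([n],-)$ preserves filtered colimits and therefore so does $N$ (levelwise, hence as a functor to $\sSet$). For $\Ex$, one uses $(\Ex X)_n=\sSet(\Sd\De[n],X)$ together with the fact that $\Sd\De[n]$ has finitely many non-degenerate simplices, hence is a compact object of $\sSet$, and the same argument applies. Composing, $\Ex^2 N$ preserves filtered colimits.

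Taking $c=\om$, any sequence indexed by ordinals $<c$ is an $\om$-indexed (and therefore filtered) diagram, so $\vp$ is an isomorphism; combined with Proposition \ref{VQ2} this establishes Q3. For Q2, an arbitrary continuous transfinite sequence indexed by an ordinal $\al$ is either eventually constant (when $\al$ is a successor, in which case the colimit is the maximal term and the comparison map is the identity) or filtered (when $\al$ is a limit ordinal), so the same argument produces an isomorphism in either case, which is in particular a weak equivalence in $\sSet$.

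The main point requiring care is the preservation of the relevant colimits by $\Ex^2 N$, which reduces to the compactness of $[n]$ in $\Cat$ and of $\Sd\De[n]$ in $\sSet$; once these standard facts about finite presentability are in place, the rest of the argument is formal and proceeds by composing Proposition \ref{VQ2} with the filtered colimit interchange.
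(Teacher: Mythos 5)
Your proof is correct and follows essentially the same route as the paper: reduce to Proposition \ref{VQ2} and the fact that $\Ex^2 N$ preserves filtered colimits, the latter verified via compactness of $[n]$ in $\Cat$ (for $N$; the paper simply cites Lack) and of $\Sd\De[n]$ in $\sSet$ (for $\Ex$). The only difference is presentational — you make the factorization of the comparison map and the choice of the limit ordinal explicit, which the paper leaves implicit.
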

  \begin{proof}
  The nerve functor $N$ commutes with filtered colimits (\cite{Lac06}). So we only need to prove that the functor $\Ex$ commutes with filtered colimits. 
  This is true because $(\Ex -)_n$ is corepresented by $\Sd \De^n$, which is a compact object. 
  \end{proof}
  
  \begin{prop}\label{LocallySmall}
  In the setting of Theorem \ref{MainThmCat}, the class $\cO$ is locally small in $\Cat^I$.
  \end{prop}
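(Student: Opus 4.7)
The plan is to reduce local smallness of $\cO$ in $\Cat^I$ to the hypothesis that $\cO$ is locally small in $\Set^I$, by exploiting the fact that each $O\in \cO$ is a diagram of \emph{discrete} categories.

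First I would recall that the inclusion $\Set\hra \Cat$ sending a set to the corresponding discrete category admits a right adjoint, namely the object-set functor $\ob:\Cat\to \Set$: a functor from a discrete category on $S$ into a category $C$ is exactly a function $S\to \ob(C)$, since there are no nonidentity morphisms to specify. Applying this pointwise yields a right adjoint $\ob:\Cat^I\to \Set^I$ to the inclusion $\Set^I\hra \Cat^I$, where $\ob(D)(i)=\ob(D(i))$ with transition maps induced from those of $D$. Consequently, for every $O\in \cO$ (viewed as an object of $\Cat^I$ via the inclusion) and every $D\in \Cat^I$ there is a natural bijection
\[
\hom_{\Cat^I}(O,D) \isom \hom_{\Set^I}(O,\ob(D)).
\]

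Given an object $D\in \Cat^I$, I would then invoke local smallness of $\cO$ in $\Set^I$ applied to the object $\ob(D)\in \Set^I$ to obtain a set $\cO^\p\sse \cO$ such that every morphism from an object of $\cO$ to $\ob(D)$ in $\Set^I$ factors through some object in $\cO^\p$. Transporting these factorizations back across the bijection above produces, for each morphism $O\to D$ in $\Cat^I$ with $O\in \cO$, a factorization through an object of $\cO^\p$ in $\Cat^I$. Thus $\cO^\p$ witnesses local smallness of $\cO$ in $\Cat^I$.

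I do not foresee any real obstacle: the entire argument reduces to the adjunction $\Set^I \rls \Cat^I$ on discrete-valued diagrams, which is immediate from the fact that the objects of $\cO$ are set-valued. The same strategy will work verbatim for $\Ac^I$ and $\Pos^I$, since discrete categories are both acyclic categories and posets, and the object-set functor factors through each of these subcategories.
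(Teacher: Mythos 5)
Your proposal is correct and is essentially the paper's own argument: the paper likewise uses that the discrete inclusion $\Set^I\to \Cat^I$ is left adjoint to the functor forgetting morphisms, and transfers local smallness from $\Set^I$. You simply spell out the adjunction bijection and the transport of factorizations in more detail.
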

  \begin{proof}
  The inclusion functor $\Set^I\to \Cat^I$ is the left adjoint of the forgetful functor $\Cat^I\to \Set^I$ that forgets the morphisms. 
  So the propositions follows from that $\cO$ is locally small in $\Set^I$.
  \end{proof}
  
  Now we can construct the $\cO$-equivariant model structure on $\Cat^I$. 
  \begin{proof}[Proof of Theorem \ref{MainThmCat}]
  By Proposition \ref{UQ1}, \ref{UQ2}, and \ref{LocallySmall}, the class $\cO$ is locally small in $\Cat^I$ and satisfies Q1-Q3, thus is a collection of orbits.
  The existence of the model structure follows from Farjoun's Proposition \ref{EquivModelStructure}.

  The functors $c$, $\Sd$, $\Ex$, $N$ all preserve sets. 
  So $c \Sd^2$ and $\Ex^2 N$ preserve sets, i.e. they restrict to equivalences of categories between $\cO$ considered as a full subcategory of $\Cat^I$ and $\cO$ considered as a full subcategory of $\sSet^I$. The Quillen equivalence follows from Theorem \ref{Comparison}. 
  \end{proof}
  
  \begin{rmk}\label{RelThomason}
  The $\cO$-equivariant model structure on $\Cat^I$ can also be described as follows. 
  A morphism $X\to Y$ in $\Cat^I$ is a weak equivalence (resp. fibration) if and only if for all $O\in \cO$, 
  $\ihom(O,X)\to \ihom(O,Y)$ is a weak equivalence (resp. fibration) in $\Cat$ with the Thomason model structure.
  
  Note, on the other hand, that our proof does not use the Thomason model structure directly. If we take $I=\mathbf{1}$ and $\cO=\{*\}$ in Theorem \ref{MainThmCat}, then the $\cO$-equivariant model structure on $\Cat^I=I$ reduces to the Thomason model structure. 
  \end{rmk}
  
  \section{The $\cO$-equivariant model structures on $\Ac^I$ and $\Pos^I$}
  In this section we prove Theorem \ref{MainThm} for $\Ac^I$ and $\Pos^I$. 
  
  Recall that $\Ac$ is the category of the small acyclic categories and $\Pos$ is the category of posets.
  Let $\cC$ denote the category $\Ac$ or $\Pos$. 
  The category $\Pos$ is a full subcategory of $\Ac$, which is in turn a full subcategory of $\Cat$. So we can see $\Ex^2 N$ as a functor from $\cC$ to $\sSet$, and define $\hom$ and $\ihom$ in $\cC^I$ using the corresponding hom functors in $\Cat^I$. 
  By \cite{Tho80}, Lemma 5.6, the functor $c \Sd^2$ takes values in posets. 
  Therefore we can see $c\Sd^2$ as a functor from $\sSet$ to $\cC$. 
  
  The functor $\hom$ gives $\cC$ an $\sSet$-enriched structure. 
  It is well known that the underlying category of $\cC$ is bicomplete.
  The power and copower structure of $\cC$ as an $\sSet$-enriched category is similar to that of $\Cat$. For $C\in \cC$ and $X\in \sSet$, the copower $C\ot X$ is $C\t c\Sd^2 X$ and the power $[X,C]$ is $\ihom(c\Sd^2 X,C)$.
  Therefore $\cC$ is a bicomplete $\sSet$-enriched category. 
  

  \begin{thm}[Theorem \ref{MainThm} for $\Ac^I$ and $\Pos^I$] \label{MainThmAcPos}
  Let $\cC$ be $\Ac$ or $\Pos$. 
  Let $I$ be a small category and $\cO$ be a locally small class of objects in $\Set^I$ such that $\colim_I O=*$ for every $O\in \cO$. 
  Then $\cC^I$ admits the $\cO$-equivariant model structure and there is a Quillen equivalence $$c\Sd^2: \sSet^I \lrs \cC^I:\Ex^2 N$$ where both sides are equipped with the $\cO$-equivariant model structures. 
  \end{thm}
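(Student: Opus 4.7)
The plan is to run the argument for $\Cat^I$ (Theorem \ref{MainThmCat}) verbatim in $\cC^I$, with one additional verification: that every colimit needed in Q1--Q3 is already in $\cC^I$, so that it coincides with the corresponding colimit computed in $\Cat^I$. Since $\ihom$ on $\cC^I$ is by definition inherited from $\Cat^I$, and $\hom = \Ex^2 N \ihom$, once this compatibility is in place, Propositions \ref{VQ1}, \ref{VQ2}, \ref{UQ1}, and \ref{UQ2} carry over without change.

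For the compatibility, note that $\cI_\cC = \cI_\Cat$ and $\cJ_\cC = \cJ_\Cat$ consist of Dwyer maps between posets, and for $O \in \cC^I$ and $X \in \sSet$ the copower $O \ot X = O \t c\Sd^2 X$ lies in $\cC^I$ because $c\Sd^2 X$ is a poset and the Cartesian product of posets (resp.\ acyclic categories) is a poset (resp.\ acyclic category). A pushout of the form appearing in Q1 can therefore be computed objectwise in $\Cat$ via Lemma \ref{DwyerPushout}. Inspecting the explicit description there, no parallel morphisms or new isomorphisms between distinct objects are created: the hom-sets of the pushout are either hom-sets of $B$, hom-sets of $C$, empty, or singletons $C(d, F(r(d^\p)))$, each of which has at most one element when $B$ and $C$ are posets. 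An analogous check shows the pushout is acyclic when $B$ and $C$ are. Sequential colimits are handled the same way, using that the maps $X_a\to X_{a+1}$ are monomorphisms by Lemma \ref{DwyerPushout}, so the colimits are unions. This gives the analogues of Propositions \ref{VQ1} and \ref{VQ2} for $\cC^I$, and then Propositions \ref{UQ1} and \ref{UQ2} go through verbatim: the weak equivalence of $N \ihom(O^\p, X_a) \cup \ldots \to N \ihom(O^\p, X_{a+1})$ still follows from \cite{Tho80}, Proposition 4.3 (the pushout in $\cC^I$ equals the pushout in $\Cat^I$ and is along a Dwyer map between posets), and $\Ex$ and $N$ still commute with filtered colimits. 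Local smallness of $\cO$ in $\cC^I$ follows exactly as in Proposition \ref{LocallySmall}, since the inclusion $\Set^I \to \cC^I$ is left adjoint to the forgetful functor.

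Combining these, $\cO$ is a collection of orbits in $\cC^I$, so Farjoun's Proposition \ref{EquivModelStructure} furnishes the $\cO$-equivariant model structure. For the Quillen equivalence, the functor $c\Sd^2$ takes values in posets by \cite{Tho80}, Lemma 5.6, hence in $\cC$, and $\Ex^2 N$ preserves sets (viewed as discrete categories), so the adjunction restricts to an equivalence between $\cO$ as a full subcategory of $\sSet^I$ and $\cO$ as a full subcategory of $\cC^I$; Theorem \ref{Comparison} then gives the Quillen equivalence. The only substantive point to check is the stability of the pushouts of Lemma \ref{DwyerPushout} under reflection from $\Cat$ to $\cC$, and this is immediate from the explicit formulas; once it is in hand, the entire argument for $\Cat^I$ transfers directly.
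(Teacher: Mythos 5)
Your proposal is correct and follows essentially the same route as the paper: reduce the orbit axioms Q1--Q3 for $\cC^I$ to the $\Cat^I$ case by observing that the relevant pushouts along Dwyer maps between posets and the relevant transfinite (filtered) colimits, computed in $\Cat$, already land in $\cC$, then invoke Farjoun's Proposition \ref{EquivModelStructure} and Theorem \ref{Comparison}. The only difference is that you re-derive the needed closure facts directly from Lemma \ref{DwyerPushout} and the union description of the sequential colimits, whereas the paper simply cites them (Thomason, Lemma 5.6, for $\Pos$; Bruckner, Propositions 4.1 and 4.5, for $\Ac$), and your verification of those facts is sound.
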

  
  We prove that $\cO$ satisfies the orbit axioms Q1-Q3. 
  \begin{prop}[Q1]\label{CQ1}
  In the setting of Theorem \ref{MainThmAcPos}, the class $\cO\sse \cC^I$ satisfies Q1. 
  \end{prop}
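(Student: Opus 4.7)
The plan is to reduce Q1 for $\cC^I$ to the already-established Q1 for $\Cat^I$ (Proposition \ref{UQ1}). Since $\cC$ is a full subcategory of $\Cat$, the copower $O \ot K = O \t c\Sd^2 K$ is computed identically in $\cC^I$ and $\Cat^I$, and by hypothesis $\cI_\cC = \cI_\Cat$. If the pushout $X_{a+1}$ taken in $\Cat^I$ already lies in $\cC^I$, then this pushout is also the pushout in $\cC^I$; combined with the fact that $\ihom(O^\p,-)$ in $\cC^I$ is by definition the restriction of $\ihom(O^\p,-)$ in $\Cat^I$, the assertion of Q1 for $\cC^I$ becomes a literal instance of Q1 for $\Cat^I$.

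The key closure step is therefore: if $X_a$ is pointwise in $\cC$, then the pointwise pushout in $\Cat$ of $X_a(i) \leftarrow (O \ot K)(i) \to (O \ot L)(i)$ remains in $\cC$. By Lemma \ref{DwyerProduct}, this pushout is along a Dwyer map between posets, and by Lemma \ref{DwyerPushout} the resulting category has a very restricted form: the objects decompose as $\ob(X_a(i)) \sqcup \ob(O(i) \t (L \bl K))$, and the non-trivial hom-sets are either inherited from $X_a(i)$, inherited from the new piece $O(i) \t (L \bl K)$ (which is a poset), or of the form $X_a(i)(d, F(r(d^\p)))$ from old to new. From this description one checks directly that the pushout is a poset whenever $X_a(i)$ is a poset, and acyclic whenever $X_a(i)$ is acyclic. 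This is precisely the closure property of $\Pos$ (resp.\ $\Ac$) under pushouts along Dwyer maps between posets in $\Cat$, which is exactly what Raptis \cite{Rap10} and Bruckner \cite{Bru15} used to lift the Thomason model structure to $\Pos$ and $\Ac$.

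Once this closure is in hand, the argument of Proposition \ref{UQ1} transfers verbatim: by Lemma \ref{HomDwyer} the map $\ihom(O^\p, O \ot K) \to \ihom(O^\p, O \ot L)$ is a Dwyer map between posets, so by Thomason's Proposition 4.3 the induced square of nerves is a homotopy pushout in $\sSet$. Applying the natural pointwise weak equivalence $\eta:\Id \to \Ex$ twice then upgrades this to a homotopy pushout of $\hom = \Ex^2 N \ihom$ diagrams, which is the content of Q1. The main expected obstacle is the closure step above; however, this should be essentially bookkeeping from the explicit pushout formula of Lemma \ref{DwyerPushout} and is implicit in the references already cited for the Thomason model structures on $\Pos$ and $\Ac$.
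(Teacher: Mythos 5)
Your proposal is correct and follows essentially the same route as the paper: reduce Q1 for $\cC^I$ to the already-proved Q1 for $\Cat^I$ by observing that the pointwise pushouts along (Dwyer maps between posets arising from) $O\t K\to O\t L$ stay inside $\Pos$ (resp.\ $\Ac$), so the pushout in $\cC^I$ coincides with the one in $\Cat^I$. The only cosmetic difference is that you verify this closure directly from the explicit formula of Lemma \ref{DwyerPushout}, whereas the paper simply cites Thomason's Lemma 5.6 and Bruckner's Proposition 4.5 for it.
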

  \begin{proof}
  By \cite{Tho80}, Lemma 5.6, the inclusion $\Pos\to \Cat$ preserves pushouts along Dwyer maps between posets. 
  By \cite{Bru15}, Proposition 4.5, the inclusion $\Ac\to \Cat$ preserves pushouts whose leg is a Dwyer map between posets. 
  So the relevant pushouts in $\cC$ can be performed in $\Cat$. 
  \end{proof}
  
  \begin{prop}[Q2 and Q3]\label{CQ2}
  In the setting of Theorem \ref{MainThmAcPos}, the class $\cO\sse \cC^I$ satisfies Q3, thus also satisfies Q2. 
  \end{prop}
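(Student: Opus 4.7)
The plan is to reduce Proposition \ref{CQ2} to Proposition \ref{UQ2} by showing that the relevant transfinite colimit in $\cC^I$ is already computed in $\Cat^I$. Fix $O\in\cO$ and a continuous transfinite sequence $X_0\to X_1\to\cdots\to X_a\to\cdots$ in $\cC^I$ whose successor maps are pushouts of the type in Q1.

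First I would verify, stage by stage, that each pushout $X_{a+1}$ computed in $\cC^I$ coincides with the corresponding pushout computed in $\Cat^I$. By Lemma \ref{DwyerProduct} the attaching map $O\ot K\to O\ot L$ is objectwise a Dwyer map between posets, so the results already invoked in the proof of Proposition \ref{CQ1}---Thomason's Lemma 5.6 for $\Pos$ and Bruckner's Proposition 4.5 for $\Ac$---apply and give the desired agreement of pushouts. Lemma \ref{DwyerPushout} moreover exhibits each $X_a\to X_{a+1}$ as an objectwise monomorphism, with an explicit description of $X_{a+1}$ that visibly stays in $\cC^I$ whenever $X_a$ does.

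The main obstacle is the limit-ordinal case: I must show that the transfinite colimit taken in $\Cat^I$ along these monomorphisms still lies in $\cC^I$, so that it is also the colimit in $\cC^I$. Working componentwise over $I$, this reduces to the claim that a filtered colimit in $\Cat$ of objects of $\cC$ along monomorphisms lies in $\cC$. For $\cC=\Pos$, antisymmetry and transitivity are each preserved by filtered unions along order-preserving monomorphisms. For $\cC=\Ac$, any composable pair of non-identity morphisms with identity composite would already appear at some bounded stage of the filtered diagram, contradicting acyclicity there, so no new cycles can appear at the colimit.

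Once $\colim_a X_a$ is known to be the same object in $\cC^I$ and in $\Cat^I$, and since $\hom=\Ex^2 N\ihom$ is defined through the inclusion $\cC\hra \Cat$ and so has the same meaning in either ambient category, Proposition \ref{UQ2} applied in $\Cat^I$ yields at once that the natural map $\colim_a\hom(O,X_a)\to \hom(O,\colim_a X_a)$ is an isomorphism in $\sSet$, establishing Q3 for $\cC^I$. Q2 then follows because isomorphisms are in particular weak equivalences.
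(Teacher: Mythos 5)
Your proposal is correct and takes essentially the same route as the paper: reduce to the $\Cat^I$ case (Proposition \ref{UQ2}) by observing that the relevant pushouts and transfinite (filtered) colimits in $\cC^I$ agree with those computed in $\Cat^I$. The only difference is that where you verify by hand that filtered colimits in $\Cat$ of posets, respectively acyclic categories, stay in $\cC$, the paper simply cites Thomason's Lemma 5.6 and Bruckner's Proposition 4.1 for preservation of filtered colimits by the inclusions $\Pos\to\Cat$ and $\Ac\to\Cat$.
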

  \begin{proof}
  By \cite{Tho80}, Lemma 5.6, the inclusion $\Pos\to \Cat$ preserves filtered colimits. 
  By \cite{Bru15}, Proposition 4.1, the inclusion $\Ac\to \Cat$ preserves filtered colimits. 
  So the relevant colimits in $\cC$ can be performed in $\Cat$. 
  \end{proof}
  
  \begin{prop}\label{CLocallySmall}
  In the setting of Theorem \ref{MainThmAcPos}, the class $\cO$ is locally small in $\cC^I$. 
  \end{prop}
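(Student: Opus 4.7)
The plan is to imitate the argument of Proposition \ref{LocallySmall} word for word, replacing the adjunction used there with an analogous adjunction between $\Set$ and $\cC$. The key observation is that the ``discrete object'' functor $\Set\to \cC$, which sends a set $S$ to the poset (or acyclic category) on $S$ whose only morphisms are identities, is left adjoint to the forgetful functor $\cC\to \Set$ sending an object to its underlying set of objects. Indeed, for any $P\in \cC$ a functor from a discrete object $\text{disc}(S)$ to $P$ is uniquely determined by a function $S\to \ob(P)$, since there are no non-identity morphisms in $\text{disc}(S)$ to preserve. Levelwise, this yields an adjunction $\Set^I \rightleftarrows \cC^I$ whose right adjoint $U$ forgets morphisms pointwise.

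The second step is to exploit this adjunction. Fix $M\in \cC^I$. Since $\cO$ is locally small in $\Set^I$ by assumption, there exists a set $\cO^\p\sse \cO$ such that every morphism in $\Set^I$ from an object of $\cO$ to $UM$ factors through an object of $\cO^\p$. Given $O\in \cO$ and a morphism $f:O\to M$ in $\cC^I$ (where $O$ is viewed as a discrete object of $\cC^I$ via the inclusion), the adjunction identifies $f$ with a morphism $\ti f:O\to UM$ in $\Set^I$. A factorization $\ti f: O\to O^{\pp}\to UM$ with $O^{\pp}\in \cO^\p$ then corresponds, again by the adjunction and by the fact that $O$ and $O^{\pp}$ are discrete, to a factorization $f: O\to O^{\pp}\to M$ in $\cC^I$. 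Hence $\cO^\p$ witnesses the local smallness of $\cO$ at $M$.

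There is no real obstacle here: the argument is essentially the same as for $\Cat^I$ in Proposition \ref{LocallySmall}, and the only thing to check is the adjunction above, which is immediate from the definitions of $\Pos$ and $\Ac$. I would simply state the proof in two or three lines, citing Proposition \ref{LocallySmall} as the model.
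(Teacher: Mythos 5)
Your proposal is correct and follows exactly the paper's route: the paper's proof of this proposition simply says it is the same as Proposition \ref{LocallySmall}, which likewise uses that the discrete-inclusion $\Set^I\to \cC^I$ is left adjoint to the morphism-forgetting functor, so local smallness transfers from $\Set^I$. Your extra detail on how the adjunction turns a factorization in $\Set^I$ into one in $\cC^I$ is a faithful expansion of that same argument.
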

  \begin{proof}
  Same as the proof of Proposition \ref{LocallySmall}. 
  \end{proof}
  
  \begin{proof}[Proof of Theorem \ref{MainThmAcPos}]
  By Proposition \ref{CQ1}, \ref{CQ2}, and \ref{CLocallySmall}, the class $\cO$ is a collection of orbits in $\cC^I$. The existence of the model structure follows from Proposition \ref{EquivModelStructure}. The proof of the Quillen equivalence follows from Theorem \ref{Comparison}. 
  \end{proof}
  
  \begin{rmk}\label{RelThomasonAcPos}
  Similar to Remark \ref{RelThomason}, we can also describe the $\cO$-equivariant model structure on $\cC^I$ using the Thomason model structure. 
  A morphism $X\to Y$ in $\cC^I$ is a weak equivalence (resp. fibration) if and only if for all $O\in \cO$, the induced map $\ihom(O,X)\to \ihom(O,Y)$ is a weak equivalence (resp. fibration) in $\cC$ (or $\Cat$) equipped with the Thomason model structure.
  \end{rmk}
  
  \section{Applications}
  In this section we discuss some applications of our theorem.
  \subsection{Equivariant diagrams}
  Let $G$ be a discrete group acting on a small category $I$. 
  Define $G\rt_a I$ to be the category whose objects are $\ob(I)$, and a morphism $i\to j$ is a pair $(g,\al:gi\to j)$ where $g\in G$ and $\al\in \mor(I)$. 
  The category of $G$-diagrams in a category $\cC$ is defined to be $\cC^{G\rt_a I}$. 
  
  When $\cC$ admits cellular fixed-point functors in the sense of Guillou and May \cite{GM11}, Dotto and Moi \cite{DM16} defined a model structure on $\cC^{G\rt_a I}$, called the ``$G$-projective'' model structure. 
  For every $i\in I$, let $G_i\sse G$ be the group of stabilizers of $i$. 
  This group acts on $i$ via the morphism $G_i\inj G\rt_a I$ defined by $*\mto i$ and $g\mto (g,\id)$. 
  A morphism $X\to Y$ in $\cC^{G\rt_a I}$ is a weak equivalence (resp. fibration), if and only if for every $i\in I$ and every subgroup $H$ of $G_i$, the induced map $X(i)^{H}\to Y(i)^H$ is a weak equivalence (resp. fibration) in $\cC$. 
  
  We give an alternative proof of the existence of the model structure using orbits.
  \begin{prop}\label{GProjEquiv}
  Let $\cC$ be $\sSet$ equipped with the standard model structure, or $\Cat$, $\Ac$ or $\Pos$ equipped with the Thomason model structure. Then $\cC^{G\rt_a I}$ admits the ``$G$-projective'' model structure. In fact, this model structure is the $\cO$-equivariant model structure for a certain set $\cO$ of objects in $\Set^{G\rt_a I}$ such that $\colim_{G\rt_a I} O=*$ for every $O\in \cO$.
  \end{prop}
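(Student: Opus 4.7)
My plan is to exhibit a set $\cO\sse\Set^{G\rt_a I}$ of orbits, each with colimit $*$, such that the resulting $\cO$-equivariant model structure on $\cC^{G\rt_a I}$ reproduces the $G$-projective model structure.

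Write $J=G\rt_a I$. For each $i\in\ob(I)$ the stabilizer $G_i$ embeds in $J(i,i)$ via $h\mto(h,\id_i)$, and hence acts by precomposition on the representable diagram $J(i,-)\colon J\to\Set$. For each subgroup $H\sse G_i$ let $O_{i,H}:=J(i,-)/H$ denote the pointwise quotient in $\Set^J$, and set
\[
\cO := \{\, O_{i,H} : i\in \ob(I),\ H\sse G_i \,\}.
\]
This is a set, since $\ob(I)$ is a set and each $G_i$ has a set of subgroups. The category of elements of $J(i,-)$ has the initial object $(i,\id_i)$, so $\colim_J J(i,-)=*$, and passing to the $H$-quotient gives $\colim_J O_{i,H}=*$.

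Next I identify the enriched hom. By the Yoneda lemma together with the end formula for the internal hom in $\cC^J$, there is a natural isomorphism $\ihom_{\cC^J}(J(i,-),X)\isom X(i)$ in $\cC$; taking the equaliser that computes $H$-invariants yields $\ihom_{\cC^J}(O_{i,H},X)\isom X(i)^H$, and applying $\Ex^2 N$ (the identity when $\cC=\sSet$) gives $\hom_{\cC^J}(O_{i,H},X)\isom \Ex^2 N(X(i)^H)$.

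With these preparations, Theorem \ref{MainThm} (for $\cC=\Cat,\Ac,\Pos$) and Farjoun's Proposition \ref{EquivModelStructure} applied to Farjoun's class of orbits in $\sSet^J$ (for $\cC=\sSet$) produce the $\cO$-equivariant model structure on $\cC^J$. Combined with the identification above and Remarks \ref{RelThomason} and \ref{RelThomasonAcPos}, this model structure is characterised by: $X\to Y$ is a weak equivalence (resp.\ fibration) iff $X(i)^H\to Y(i)^H$ is one in $\cC$ for every $i\in\ob(I)$ and $H\sse G_i$, which is precisely the $G$-projective condition. The main point requiring care is the enriched Yoneda/end computation giving $\ihom_{\cC^J}(O_{i,H},X)\isom X(i)^H$, but this is routine since $H$-fixed points are a finite limit preserved by the relevant inclusion functors.
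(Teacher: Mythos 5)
Your proposal is correct and follows essentially the same route as the paper: the same set of orbits $O_{i,H}=(G\rt_a I)(i,-)/H$, the same colimit observation, and the same key identification $\ihom(O_{i,H},X)\isom X(i)^H$, the only difference being that you verify the corepresentability by enriched Yoneda plus the colimit-to-limit behaviour of $\ihom$ rather than the paper's explicit skeleton-by-skeleton check. (One small inaccuracy: $H$-fixed points are a limit but not a \emph{finite} one when $H$ is infinite; this does not affect the argument, since the reflective inclusions $\Pos\to\Ac\to\Cat$ preserve all limits.)
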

  \begin{proof}
  The existence of the ``$G$-projective'' model structure follows from the second claim. So we only need to find an appropriate $\cO$. 
  
  Fix $k\in I$ and a subgroup $H\sse G_k$. Consider the diagram $O_{k,H}$ in $\Set^{G\rt_a I}$, where
  \begin{enumerate}
  \item an object $i$ is mapped to $(G\rt_a I)(k,i)/H$, where $h\in H$ acts on the morphism set $(G\rt_a I)(k,i)$ by sending a morphism $(g,\al:gk\to i)$ to $(gh,\al:ghk=gk\to i)$;
  \item a morphism $(g,\al:gi\to j)$ is mapped to the map that sends the orbit of $(g^\p,\al^\p:g^\p k\to i)$ to the orbit of $(g g^\p,\al\circ (g \al^\p):gg^\p k\to g i\to j)$.
  \end{enumerate}
  
  In short, $O_{k,H}=(G\rt_a I)(k,-)/H$. 
  We know that $\colim_{G\rt_a I} (G\rt_a I)(k,-)=*$, so $\colim_{G\rt_a I} O_{k,H}=*$. 
  
  Let $\cO$ be the set of $O_{k,H}$, for all $k\in I$ and subgroup $H\sse G_k$.
  Then $\cO$ is locally small (because it is a set) and consists of diagrams whose colimits over $G\rt_a I$ are one-element sets. 
  So we have an $\cO$-equivariant model structure on $\cC^I$ by Theorem \ref{MainThm}. 
  
  To prove that the $\cO$-equivariant model structure is the ``$G$-projective'' model structure, we only need to show that $O_{k,H}$ corepresents the functor $(-)(k)^H$ in $\cC^{G\rt_a I}$. 
  For any $X\in \cC^{G\rt_a I}$, a map from $(G\rt_a I)(k,-)$ to $X$ is uniquely determined by the image of the identity in $(G\rt_a I)(k,-)$ in $X(k)$, i.e. an object in $X(k)$. 
  So a map from $(G\rt_a I)(k,-)/H$ to $X$ is uniquely determined by an object in $X(k)$ that is invariant under $H$ action. 
  This proves that the $0$-skeletons of $\ihom((G\rt_a I)(k,-),X)$ and $X(k)^H$ agree. 
  The proof for higher skeletons is similar. 
  \end{proof}
  
  \begin{rmk}
  It is well known that $\sSet$ equipped with the standard model structure is ``nice''.
  It has been verified in \cite{BMOOPY13} and in \cite{MSZ16} that $\Cat$ and $\Pos$ equipped with the Thomason model structures are ``nice''. 
  So the ``$G$-projective'' model structure is known to exist in these cases. 
  \end{rmk}
  
  \begin{coro}\label{GDiagram}
  In the setting of Proposition \ref{GProjEquiv}, the ``$G$-projective'' model structures on $\sSet^{G\rt_a I}$, $\Cat^{G\rt_a I}$, $\Ac^{G\rt_a I}$ and $\Pos^{G\rt_a I}$ are Quillen equivalent. 
  \end{coro}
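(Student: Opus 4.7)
The plan is to combine Proposition \ref{GProjEquiv} with Theorem \ref{MainThm} (applied to the indexing category $G\rt_a I$). First I would observe that $G\rt_a I$ is itself a small category, and the set $\cO=\{O_{k,H}\}_{k\in I,\,H\sse G_k}$ constructed in the proof of Proposition \ref{GProjEquiv} is a locally small class of diagrams in $\Set^{G\rt_a I}$ with $\colim_{G\rt_a I}O=*$ for every $O\in \cO$. Thus the hypotheses of Theorem \ref{MainThm} are satisfied with $I$ replaced by $G\rt_a I$.

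Next I would invoke Theorem \ref{MainThm} directly: it provides Quillen equivalences
\[
c\Sd^2: \sSet^{G\rt_a I} \lrs \cC^{G\rt_a I}:\Ex^2 N
\]
for each $\cC\in \{\Cat,\Ac,\Pos\}$, where both sides are equipped with their $\cO$-equivariant model structures. By Proposition \ref{GProjEquiv}, the $\cO$-equivariant model structure on each of $\sSet^{G\rt_a I}$, $\Cat^{G\rt_a I}$, $\Ac^{G\rt_a I}$, $\Pos^{G\rt_a I}$ coincides with the ``$G$-projective'' model structure. Composing these three Quillen equivalences (or equivalently, pairing the one for $\Cat$ with the ones for $\Ac$ and $\Pos$) yields the desired string of Quillen equivalences between the four ``$G$-projective'' model categories.

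There is essentially no obstacle here, because all the real work has already been carried out: the identification of the ``$G$-projective'' structure with an $\cO$-equivariant one was established in Proposition \ref{GProjEquiv}, and the existence and comparison of $\cO$-equivariant model structures on $\sSet^J$, $\Cat^J$, $\Ac^J$, $\Pos^J$ for an arbitrary small category $J$ is precisely the content of Theorem \ref{MainThm}. The corollary is therefore just the specialization $J=G\rt_a I$ combined with the identification of model structures, and the proof reduces to citing these two results.
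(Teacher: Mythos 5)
Your proposal is correct and follows exactly the paper's argument: the paper also deduces the corollary immediately from Proposition \ref{GProjEquiv} together with Theorem \ref{MainThm} applied to the small category $G\rt_a I$ with the orbit set $\{O_{k,H}\}$. Your write-up merely spells out the same citation-level proof in more detail.
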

  \begin{proof}
  It follows from Proposition \ref{GProjEquiv} and Theorem \ref{MainThm}. 
  \end{proof}
  
  \begin{rmk}
  Take $\cC=\Cat$, and $I=\mathbf{1}$. Let $G$ act on $I$ trivially. 
  Corollary \ref{GDiagram} gives the main result of \cite{BMOOPY13}, which says that $\Cat^G$ admits the fixed point model structure and this model structure is Quillen equivalent to $\sSet^I$ equipped with the fixed point structure. 
  \end{rmk}
  
  \begin{rmk}
  Take $\cC=\Pos$, and $I=\mathbf{1}$. Let $G$ act on $I$ trivially. 
  Corollary \ref{GDiagram} gives the main result of \cite{MSZ16}, which says that $\Pos^G$ admits the fixed point model structure and this model structure is Quillen equivalent to $\sSet^I$ equipped with the fixed point structure. 
  \end{rmk}
  
  \subsection{The collection of all $I$-orbits}
  Finally, we give an example where $\cO$ is a proper class rather than a set. 
  \begin{prop}
  Let $I$ be a small category, $\cC$ be the category $\Cat$, $\Ac$ or $\Pos$, 
  and $\cO$ be the class of all diagrams in $\Set^I$ whose colimits are one-element sets. 
  Then $\cC^I$ admits the $\cO$-equivariant model structure, and this model structure is Quillen equivalent to $\sSet^I$ equipped with the $\cO$-equivariant model structure. 
  \end{prop}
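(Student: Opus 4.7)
The statement will follow immediately from Theorem \ref{MainThm} once we verify that this particular $\cO$ is locally small in $\Set^I$. Every $O \in \cO$ already satisfies $\colim_I O = *$ by definition, and local smallness in $\cC^I$ is a formal consequence of local smallness in $\Set^I$ via the inclusion/forgetful adjunction, exactly as in Proposition \ref{LocallySmall} and Proposition \ref{CLocallySmall}. So it suffices to prove that the class of all $O \in \Set^I$ with $\colim_I O = *$ is locally small.

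For this I would fix $M \in \Set^I$ and, given any morphism $f : O \to M$ with $O \in \cO$, form the pointwise image $\im(f) \sse M$ defined by $\im(f)(i) = f_i(O(i))$. Naturality of $f$ gives $M(\varphi)(f_i(x)) = f_j(O(\varphi)(x)) \in f_j(O(j))$ for every $\varphi : i \to j$ in $I$, so the structure maps of $M$ restrict to those of $\im(f)$ and $\im(f)$ is genuinely a sub-diagram of $M$; the morphism $f$ factors as $O \sur \im(f) \inj M$. The pointwise surjection $O \sur \im(f)$ induces a surjection $\colim_I O \sur \colim_I \im(f)$, whose source is a one-element set and whose target is nonempty (since $O$ is nonempty in some component, so is $\im(f)$); therefore $\colim_I \im(f) = *$ and $\im(f) \in \cO$. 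Setting $\cO^\p \sse \cO$ to be the collection of all sub-diagrams of $M$ whose colimit is a point, the factorisation just constructed passes through $\cO^\p$. Because a sub-diagram of $M$ is specified by a choice of subset $P(i) \sse M(i)$ for each $i \in I$ and each $M(i)$ is a set, $\cO^\p$ is a set; this is the required local-smallness witness.

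The whole proof is thus a short application of Theorem \ref{MainThm}, and the only non-trivial input is the local-smallness check above. The reason the argument has to be carried out inside $\Set^I$ rather than directly in $\cC^I$ is that the image of a morphism $O \to M$ in $\Cat^I$ (or $\Ac^I$, $\Pos^I$) need not be controlled in size by $M$ alone, whereas for set-valued diagrams the sub-diagrams of $M$ form a set; this is precisely the role played by the inclusion/forgetful adjunction $\Set^I \rls \cC^I$ in Proposition \ref{LocallySmall}. I do not expect any further obstacle.
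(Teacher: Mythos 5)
Your proof is correct and takes essentially the same route as the paper: reduce everything to Theorem \ref{MainThm} (together with the adjunction argument of Propositions \ref{LocallySmall} and \ref{CLocallySmall}) and verify that the class of diagrams in $\Set^I$ with point colimit is locally small. The only difference is that the paper discharges that last step by citing Farjoun \cite{Far87} as well-known, whereas you supply the standard image-factorization argument explicitly (factor $O\to M$ through the pointwise image, note the induced surjection on colimits forces $\colim_I \im(f)=*$, and observe that subdiagrams of $M$ form a set), which is a correct and self-contained filling-in of that citation.
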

  \begin{proof}
  To apply Theorem \ref{MainThm}, we only need to verify that $\cO$ is locally small in $\Set^I$. 
  This is well-known by \cite{Far87}. 
  \end{proof}
  
  \appendix

  \section{Properness}
  In this appendix we prove that the $\cO$-equivariant model structures on $\sSet^I$, $\Cat^I$, $\Ac^I$, and $\Pos^I$ are proper.
  
  \begin{prop}\label{OEquivRProper}
  Let $\cM$ be a bicomplete $\sSet$-enriched category and $\cO$ be a collection of orbits in $\cM$. Then the $\cO$-equivariant model structure on $\cM$ is right proper. 
  \end{prop}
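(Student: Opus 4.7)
The plan is to reduce right properness of the $\cO$-equivariant model structure on $\cM$ to right properness of the standard model structure on $\sSet$, which is classical. Suppose we are given a pullback square
\[
\xymatrix{
P \ar[r] \ar[d] & X \ar[d]^{f} \\
Y \ar[r]_{g} & Z
}
\]
in $\cM$ in which $f$ is an $\cO$-equivariant fibration and $g$ is an $\cO$-equivariant weak equivalence. I want to show that $P \to X$ is an $\cO$-equivariant weak equivalence, i.e.\ that $\hom(O,P) \to \hom(O,X)$ is a weak equivalence in $\sSet$ for every $O \in \cO$.

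The key observation is that for each $O \in \cO$, the functor $\hom(O,-)\colon \cM \to \sSet$ is the right adjoint of the copower $O \ot -$ (by definition of the $\sSet$-enriched and tensored/cotensored structure on $\cM$), and hence preserves all small limits. In particular, applying $\hom(O,-)$ to the square above yields a pullback square in $\sSet$
\[
\xymatrix{
\hom(O,P) \ar[r] \ar[d] & \hom(O,X) \ar[d]^{\hom(O,f)} \\
\hom(O,Y) \ar[r]_{\hom(O,g)} & \hom(O,Z).
}
\]
By the definition of the $\cO$-equivariant model structure, $\hom(O,f)$ is a fibration and $\hom(O,g)$ is a weak equivalence in $\sSet$ with the standard model structure.

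Now I would invoke right properness of the standard model structure on $\sSet$: the pullback of a weak equivalence along a Kan fibration is again a weak equivalence. This immediately gives that $\hom(O,P) \to \hom(O,X)$ is a weak equivalence in $\sSet$, and since $O \in \cO$ was arbitrary, $P \to X$ is an $\cO$-equivariant weak equivalence, as required.

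There is essentially no obstacle here: the argument is purely formal once one notes that $\hom(O,-)$ commutes with pullbacks and that fibrations and weak equivalences are detected by the functors $\hom(O,-)$. The analogous proof of left properness would be the harder statement, since the copower $O \ot -$ is a left adjoint and does not preserve pushouts as well; but for right properness no such issue arises.
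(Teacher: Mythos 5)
Your argument is correct and coincides with the paper's own proof: both apply $\hom(O,-)$, which preserves pullbacks since $\cM$ is bicomplete as an $\sSet$-enriched category, note that the resulting square in $\sSet$ has a fibration and a weak equivalence by the very definition of the $\cO$-equivariant structure, and conclude by right properness of the standard model structure on $\sSet$. Nothing further is needed.
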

  \begin{proof}
  We prove that weak equivalences are preserved by pullbacks along fibrations. 
  Let $D=B\t_{A} C$ be a pullback in $\cM$ where the morphism $B\to A$ is a weak equivalence and the morphism $C\to A$ is a fibration. 
Then for every $O\in \cO$, the induced map $\hom(O,B)\to \hom(O,A)$ is a weak equivalence and $\hom(O,C)\to \hom(O,A)$ is a fibration.
  The functor $\hom(O,-)$ preserves limits, so $\hom(O,D)=\hom(O,B)\t_{\hom(O,A)}\hom(O,C)$.
  Because $\sSet$ with the standard model structure is right proper, the map $\hom(O,D)\to \hom(O,C)$ is a weak equivalence. 
  Hence $D\to C$ is a weak equivalence. 
  \end{proof}
  
  \begin{prop}\label{OEquivProper}
  Let $I$ be a small category, $\cC$ be the category $\sSet$, $\Cat$, $\Ac$ or $\Pos$. Let $\cO$ be a locally small class of objects in $\Set^I$ such that $\colim_I O=*$ for every $O\in \cO$. 
  Then the $\cO$-equivariant model structure on $\cC^I$ is left proper. 
  \end{prop}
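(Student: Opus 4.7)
The plan is to reduce left properness of the $\cO$-equivariant model structure on $\cC^I$ to the left properness of $\sSet$, via the corepresented functors $\hom(O,-)\colon \cC^I\to \sSet$ for $O\in \cO$. Consider a pushout
\[
\xymatrix{A\ar[r]\ar[d]^{f} &B\ar[d]\\A'\ar[r] &B'}
\]
in $\cC^I$ with $A\to B$ a cofibration and $f$ a weak equivalence in the $\cO$-equivariant structure. It suffices to show that $\hom(O,B)\to \hom(O,B')$ is a weak equivalence in $\sSet$ for every $O\in \cO$.

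By the small object argument, $A\to B$ is a retract of a relative cell complex $A\to \ol B$ built from the generators $\{O_0\ot K\to O_0\ot L: O_0\in \cO,\ K\to L\in \cI_{\sSet}\}$. Since the pushout is functorial, $B\to B'$ is a retract in the arrow category of $\ol B\to \ol B\sqcup_A A'$, so one may assume that $A\to B$ itself is a cell complex. Filter $B=\colim_{\al<\ka} B_\al$ with $B_0=A$ and $B_{\al+1}=B_\al\sqcup_{O_\al\ot K_\al}(O_\al\ot L_\al)$. The standard stacking identity for pushouts yields $B'=\colim_{\al<\ka} B'_\al$ where $B'_0=A'$ and $B'_{\al+1}=B'_\al\sqcup_{O_\al\ot K_\al}(O_\al\ot L_\al)$; in particular $\{B'_\al\}$ is a cell complex of the same shape, to which axiom Q2 will later apply.

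Now induct on $\al$ to show that $\hom(O,B_\al)\to \hom(O,B'_\al)$ is a weak equivalence in $\sSet$. The base case $\al=0$ is the assumption on $f$. At a successor, axiom Q1, which is Farjoun's result for $\sSet^I$ and Propositions~\ref{UQ1} and~\ref{CQ1} for $\Cat^I$, $\Ac^I$ and $\Pos^I$, makes both defining pushout squares, the one for $B_{\al+1}$ and the one for $B'_{\al+1}$, into homotopy pushouts in $\sSet$ after applying $\hom(O,-)$. The natural transformation between these two squares is the identity on $O_\al\ot K_\al$ and $O_\al\ot L_\al$ and the comparison map $B_\al\to B'_\al$ on the remaining corner, which is a weak equivalence by the inductive hypothesis; the gluing lemma in $\sSet$ then forces $\hom(O,B_{\al+1})\to \hom(O,B'_{\al+1})$ to be a weak equivalence. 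At a limit ordinal $\be$, axiom Q2 identifies $\hom(O,B_\be)$ and $\hom(O,B'_\be)$ with $\colim_{\al<\be}\hom(O,B_\al)$ and $\colim_{\al<\be}\hom(O,B'_\al)$ up to weak equivalence, and filtered colimits preserve weak equivalences in $\sSet$, so the induction propagates.

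The main obstacle is the transfinite bookkeeping at limit ordinals, which relies on applying axiom Q2 to the derived sequence $\{B'_\al\}$ and not merely to $\{B_\al\}$; this is enabled by the stacking identity, which exhibits $\{B'_\al\}$ as a cell complex built from the same generators as $\{B_\al\}$.
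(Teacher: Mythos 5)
Your proof is correct, but it takes a genuinely different route from the paper. You run a transfinite cell induction: reduce (by Farjoun's generalized small object argument for the locally small class $\{O\ot K\to O\ot L\}$ and a retract argument) to the case where the cofibration is a relative cell complex, push the filtration across the pushout via the stacking identity, and then compare the two towers in $\sSet$ using only the orbit axioms -- Q1 plus the gluing lemma for homotopy pushouts at successor stages, and Q2 plus stability of weak equivalences under filtered colimits at limit stages. The paper instead argues at the level of the internal hom $\ihom(O,-):\cC^I\to\cC$: it shows this functor sends the generating cofibrations to cofibrations (Lemma \ref{HomChange}), preserves the relevant pushouts and transfinite compositions on the nose (Propositions \ref{VQ1}, \ref{VQ2} and \cite{BMOOPY13}, Lemma 3.1), and then quotes left properness of $\cC$ itself with the Thomason model structure (Cisinski for $\Cat$, Raptis for $\Pos$, Bruckner for $\Ac$), together with the characterization of the $\cO$-equivariant weak equivalences via $\ihom$ from Remarks \ref{RelThomason} and \ref{RelThomasonAcPos}. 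What your argument buys is generality and independence from those properness results: it only uses the orbit axioms and the generating class, so it would prove left properness of any $\cO$-equivariant model structure in Farjoun's setting, and it works uniformly for $\sSet^I$, $\Cat^I$, $\Ac^I$, $\Pos^I$ without invoking Cisinski's theorem. What the paper's argument buys is brevity given the machinery already in place: the strict (not just homotopy) pushout and colimit computations established for Q1--Q3 let it transfer the question directly to $\cC$, with no transfinite bookkeeping. The only point worth making explicit in your write-up is that the factorization you invoke is Farjoun's class-indexed small object argument (since $\cO$ may be a proper class), which is the same fact the paper uses when it asserts that $\{O\ot K\to O\ot L\}$ is a class of generating cofibrations.
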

  \begin{proof}
  When $\cC=\sSet$, we equip $\cC$ with the standard model structure; in other cases, we equip $\cC$ with the Thomason model structure. Then $\cC$ is proper. (The case $\cC=\sSet$ is well-known; the case $\cC=\Cat$ is by \cite{Cis99}; the case $\cC=\Pos$ is by \cite{Rap10}; the case $\cC=\Ac$ is by \cite{Bru15}.)
  
  Let $\ihom$ denote the internal hom of $\cC$. Then in the $\cO$-equivariant model structure on $\cC^I$, a map $X\to Y$ is a weak equivalence (resp. fibration) if and only if for every $O\in \cO$, the induced map $\ihom(O,X)\to \ihom(O,Y)$ is a weak equivalence (resp. fibration) in $\cC$. 
  (The case $\cC=\sSet$ is by definition; the case $\cC=\Cat$ is by Remark \ref{RelThomason}; the case $\cC=\Ac$ or $\Pos$ is by Remark \ref{RelThomasonAcPos}.)
  
  Now we prove that for every $O\in \cO$, the functor $\ihom(O,-):\cC^I\to \cC$ preserves cofibrations.
  Farjoun's proof \cite{Far87} of the existence of the $\cO$-equivariant model structure says that the class $\cI_{\cC^I}=\{O\ot K\to O\ot L:K\to L\in \cI_{\sSet}, O\in \cO\}$ is a class of generating cofibrations in the $\cO$-equivariant model structure. 
  Let $\cI_{\Cat}=\cI_{\Ac}=\cI_{\Pos}=c\Sd^2 \cI_{\sSet}$. Then $\cI_{\cC^I}$ can be written as $\{O\t K\to O\t L:K\to L\in \cI_\cC, O\in \cO\}$. 
  
  By Lemma \ref{HomChange}, for every $O\t K\to O\t L\in \cI_{\cC^I}$ and every $O^\p\in \cO$, the map $\ihom(O^\p,O\t K)\to \ihom(O^\p,O\t L)$ is the same as $\ihom(O^\p,O)\t K\to \ihom(O^\p,O)\t L$, which is a cofibration. 
  By Proposition \ref{VQ1} and \ref{VQ2}, for every $O\in \cO$, the functor $\ihom(O,-)$ preserves transfinite compositions of pushouts of morphisms in $\cI_{\cC^I}$. 
  (The referred lemma and propositions are for $\Cat$, but the proofs hold for $\sSet$, $\Ac$ and $\Pos$.)
  So $\ihom(O,-)$ preserves cofibrations. 
  By \cite{BMOOPY13}, Lemma 3.1, the functor $\ihom(O,-)$ presreves pushouts along cofibrations. 
  
  Now let $D=B\cup_A C$ be a pushout in $\cC^I$ where the morphism $A\to B$ is a weak equivalence and the morphism $A\to C$ is a cofibration. We prove that the map $C\to D$ is a weak equivalence. 
   For every $O\in \cO$, the induced map $\ihom(O,A)\to \ihom(O,B)$ is a weak equivalene and $\ihom(O,A)\to \ihom(O,C)$ is a cofibration. 
   The functor $\ihom(O,-)$ preserves pushouts of cofibrations, so $\ihom(O,D)=\ihom(O,B)\cup_{\ihom(O,A)}\hom(O,C)$.
   Because $\cC$ is left proper, the natural map $\ihom(O,C)\to \ihom(O,D)$ is a weak equivalence. 
   Hence $C\to D$ is a weak equivalence. 
  
  
  \end{proof}
  
  


  \bibliographystyle{alpha}
  \bibliography{ref}
\end{document}